\newcommand{\BlackBoxes}{\global\overfullrule5pt}
\newcommand{\A}{\mathcal{A}}
\newcommand{\B}{\mathcal{B}}
\newcommand{\E}{\mathbb{E}}
\newcommand{\F}{\mathcal{F}}
\newcommand{\G}{\mathcal{G}}
\renewcommand{\H}{\mathcal{H}}
\newcommand{\MM}{\mathbb{M}}
\newcommand{\N}{\mathbb{N}}
\renewcommand{\P}{\mathbb{P}}
\newcommand{\R}{\mathbb{R}}
\newcommand{\T}{\mathcal{T}}
\newcommand{\X}{\mathcal{X}}
\newcommand{\Z}{\mathcal{Z}}
\newcommand{\Zf}{\mathfrak{Z}}
\newcommand{\1}{\mathbbm{1}}
\newcommand{\q}{F^{-1}}
\DeclareMathOperator{\dif}{d}
\DeclareMathOperator{\ES}{ES}
\DeclareMathOperator{\epi}{epi}
\DeclareMathOperator{\esssup}{ess\,sup}
\DeclareMathOperator{\id}{id}
\DeclareMathOperator{\VaR}{VaR}
\newcommand{\wh}{\widehat}
\newtheorem{theorem}{Theorem}
\newtheorem{corollary}[theorem]{Corollary}
\newtheorem{lemma}[theorem]{Lemma}
\newtheorem{proposition}[theorem]{Proposition}
\theoremstyle{definition}
\newtheorem{example}[theorem]{Example}
\newtheorem{remark}[theorem]{Remark}
\newtheorem{definition}[theorem]{Definition}
\newtheorem{assumption}[theorem]{Assumption}
\numberwithin{equation}{section} \numberwithin{theorem}{section}
\def\0{\kern0pt\-\nobreak\hskip0pt\relax}
\def\makeoverbar#1#2#3#4#5#6#7{ \setbox0=\hbox{$\m@th#2\mkern#5mu{{}#3{}}\mkern#6mu$} \setbox1=\null \dimen@=#4\fontdimen8#13 \dimen@=3.5\dimen@
\advance\dimen@ by \ht0 \dimen@=-#7\dimen@ \advance\dimen@ by \wd0
\ht1=\ht0 \dp1=\dp0 \wd1=\dimen@
\dimen@=\fontdimen8#13 \fontdimen8#13=#4\fontdimen8#13
\rlap{\hbox to \wd0{$\m@th\hss#2{\overline{\box1}}\mkern#5mu$}}
\fontdimen8#13=\dimen@}
\def\mylabel#1#2{{\def\@currentlabel{#2}\label{#1}}}
\begin{document}
\title[Minimizing  spectral risk measures applied to Markov Decision Processes]{Minimizing  spectral risk measures applied to Markov Decision Processes}
\author[N. \smash{B\"auerle}]{Nicole B\"auerle}
\address[N. B\"auerle]{Department of Mathematics,
Karlsruhe Institute of Technology (KIT), D-76128 Karlsruhe, Germany}

\email{\href{mailto:nicole.baeuerle@kit.edu}{nicole.baeuerle@kit.edu}}

\author[A. \smash{Glauner}]{Alexander Glauner}
\address[A. Glauner]{Department of Mathematics,
Karlsruhe Institute of Technology (KIT), D-76128 Karlsruhe, Germany}

\email{\href{mailto:alexander.glauner@kit.edu} {alexander.glauner@kit.edu}}


\begin{abstract}
	We study the minimization of a spectral risk measure of the total discounted cost generated by a Markov Decision Process (MDP) over a finite or infinite planning horizon. The MDP is assumed to have Borel state and action spaces and the cost function may be unbounded above. The optimization problem is split into two minimization problems using an infimum representation for spectral risk measures. We show that the inner minimization problem can be solved as an ordinary MDP on an extended state space and give sufficient conditions under which an optimal policy exists. Regarding the infinite dimensional outer minimization problem, we prove the existence of a solution and derive an algorithm for its numerical approximation. Our results include the findings in \citet{BaeuerleOtt2011} in the special case that the risk measure is Expected Shortfall. As an application, we present a dynamic extension of the classical static optimal reinsurance problem, where an insurance company minimizes its cost of capital.
\end{abstract}
\maketitle


\makeatletter \providecommand\@dotsep{5} \makeatother

\vspace{0.5cm}
\begin{minipage}{14cm}
{\small
\begin{description}
\item[\rm \textsc{ Key words}]
{\small Risk-Sensitive Markov Decision Process; Spectral Risk Measure; Dynamic Reinsurance}
\item[\rm \textsc{AMS subject classifications}] 
{\small 90C40, 91G70, 91G05 }

\end{description}
}
\end{minipage}

\section{Introduction}
In the last decade, there have been various proposals to replace the expectation in the optimization of Markov Decision Processes (MDPs) by risk measures. The idea behind it is to take the risk-sensitivity of the decision maker into account. Using simply the expectation models a risk-neutral decision maker whose optimal policy sometimes can be very risky, for an example see e.g. \citet{BaeuerleOtt2011}. 

The literature can here be divided into two streams: Those papers which apply risk measures recursively and those which apply the risk measure to the total cost. The recursive approach for general MDP can for example be found in \citet{Ruszcynski2010,chu2014markov,BauerleGlauner2020}. The theory for these kind of models is rather different to the ones where the risk measures is applied to the total cost, since in the recursive approach we still get a recursive solution procedure directly. In this paper, we contribute to the second model class, i.e. we assume that a cost process is generated over discrete time by a decision maker and she aims at minimizing the risk measure applied to either the cost over a finite time horizon or over an infinite time horizon. The class of risk measures we consider here are so-called {\em spectral risk measures } which form a class of coherent risk measures including the {\em Expected Shortfall} or {\em Conditional Value-at-Risk}. More precisely spectral risk measures are mixtures of Expected Shortfall at different levels. 

For Expected Shortfall, the problem has already been treated in e.g. \citet{BaeuerleOtt2011,chow2015risk,ugurlu2017controlled}. Whereas in \citet{chow2015risk} the authors use a decomposition result of the Expected Shortfall shown in \citet{pflug2016time}, the authors of \citet{BaeuerleOtt2011} use the representation of Expected Shortfall as the solution of a global optimization problem over a real valued parameter, see \citet{RockafellarUryasev2000}. Interchanging the resulting two infima from the optimization problems yields a two-step method to solve the decision problem. Using the recent representation of spectral risk measures as an optimization problem over functions involving the convex conjugate in \citet{Pichler2015}, we follow a similar approach here. The problem can again be decomposed into an inner and outer optimization problem. The inner problem is to minimize the expected  convex function of the total cost. It can be solved with MDP techniques after a suitable extension of the original state space. Note that already here we get some difference to the  Expected Shortfall problem.  In contrast to the findings in \citet{BaeuerleOtt2011} who assume bounded cost or  \citet{ugurlu2017controlled}  who assume $L^1$ cost, we only require the cost to be bounded from below. No further integrability assumption is necessary here. Moreover, we allow for general Borel state and action spaces and give continuity and compactness conditions under which an optimal policy exists.  The major challenge is now the outer optimization problem, since we have to minimize over a function space and the dependence of the value function of the MDP on the functions is involved. However, we are again able to prove the existence of an optimal policy and an optimal function in the representation of the spectral risk measure. Moreover, by approximating the function space in the right way, we are able to reduce the outer optimization problem to a finite dimensional problem with a predetermined error bound. This yields an algorithm for the solution of the original optimization problem. Using an example from optimal reinsurance we show how our results can by applied.

Note that for the Expected Shortfall authors in \citet{chow2014algorithms,tamar2015policy} have developed gradient-based methods for the numerical computation of the optimal value and policy. For finite state and action spaces \citet{li2017quantile} provide an algorithm for quantile minimization of MDPs which is a similar problem. However, the outer optimization problem for spectral risk measures is much more demanding, since it is infinite dimensional. 

The paper is organized as follows: In the next section, we summarize definitions and properties of risk measures and introduce in particular the class of spectral risk measures which we consider here. In Section \ref{sec:decision_model}, we introduce the Markov Decision Model and give continuity and compactness assumptions which will later guarantee the existence of optimal policies. At the end of this section we formulate the spectral risk minimization problem of total cost. We also give some interpretations and show relations to other problems. In Section \ref{sec:finite}, we consider the inner problem with a finite time horizon. The necessary state space extension is explained as well as the recursive solution algorithm. Moreover, the existence of optimal policies is shown. In the next section we turn to the inner problem with infinite time horizon. We characterize the value function and show how optimal policies are obtained. In Section \ref{sec:monotone}, we discuss our model assumptions. In case the state space is the real line we show that the restrictive assumption of the continuity of the transition function which we need in the general model, can be replaced by  semicontinuity if some further monotonicity assumptions are satisfied. In Section \ref{sec:outer_existence}, we finally treat the outer optimization problem and prove the existence of an optimal function in the  representation of the spectral risk measure. The next section then deals with the numerical treatment of this problem. We show here that the infinite dimensional optimization problem can be approximated by a finite dimensional one. Last but not least in the final section we apply our findings to an optimal dynamic reinsurance problem. Problems of this type have been treated in a static setting before, see e.g. \citet{ChiTan2013,Cui2013,Lo2017,BauerleGlauner2018}, but we will consider them in a dynamic framework for the first time. The aim is to minimize the solvency capital calculated with a spectral risk measure by actively choosing reinsurance contracts for the next period. When the premium for the reinsurance contract is calculated by the expected premium principle we show that the optimal reinsurance contracts are of stop-loss type.

\section{Spectral Risk Measures}\label{sec:spectral_risk_measures}

Let $(\Omega, \A, \P)$ be a probability space and $L^0=L^0(\Omega, \A, \P)$ the vector space of real-valued  random variables. By $L^1$ we denote the subspace of integrable random variables and by $L^0_{\ge 0}$  the subspace  which consists of non-negative random variables. We follow the convention of the actuarial literature that positive realizations of random variables represent losses and negative ones gains. Let $\X \subseteq L^0$ be a convex cone. A \emph{risk measure} is a functional $\rho : \X \to \bar{\R}$. The following properties are relevant in this paper.

\begin{definition}\label{def:rm_properties}
	A risk measure $\rho : \X \to \bar{\R}$ is called
	\begin{enumerate}
		\item \emph{law-invariant} if $\rho(X)=\rho(Y)$ for $X,Y$ with the same distribution.
		\item \emph{monotone} if $X\leq Y$ implies $\rho(X) \leq \rho(Y)$.
		\item \emph{translation invariant} if $\rho(X+m)=\rho(X)+m$ for all $m \in \R \cap \X$.
		\item \emph{positive homogeneous} if $\rho(\lambda X)=\lambda\rho(X)$ for all $\lambda \in \R_+$.
		\item \emph{comonotonic additive} if $\rho(X+Y) = \rho(X)+\rho(Y)$ for all comonotonic $X,Y$.
		\item \emph{subadditive} if $\rho(X+Y)\leq \rho(X)+\rho(Y)$ for all $X,Y$.
	\end{enumerate}
\end{definition}

A risk measure is referred to as \emph{monetary} if it is monotone and translation invariant. It appears to be consensus in the literature that these two properties are a necessary minimal requirement for any risk measure. Monetary risk measures which are additionally positive homogeneous and subadditive are called \emph{coherent}. We will focus on the following class of risk measures. Here, $F_X(x)=\P(X\leq x), \ x \in \R$ denotes the distribution function and $\q_X(u)=\inf\{x \in \R: F_X(x)\geq u\}, \ u \in [0,1]$, the quantile function of a random variable $X$.

\begin{definition}\phantomsection\label{def:spectral_risk_measure}
	An increasing function $\phi:[0,1] \to \R_+$ with $\int_0^1 \phi(u) \dif u=1$ is called \emph{spectrum} and the functional $\rho_\phi: L^0_{\ge 0} \to \bar \R$ with
		\[ \rho_{\phi}(X)= \int_0^1 \q_X(u) \phi(u) \dif u \]
	is referred to as \emph{spectral risk measure}.
\end{definition}

Spectral risk measures were introduced by \citet{Acerbi2002}. They have all the properties listed in Definition \ref{def:rm_properties}. Properties a)-e) follow directly from respective properties of the quantile function. Verifying subadditivity is more involved, see \citet{Dhaene2000}. As part of the proof they showed that spectral risk measures preserve the increasing convex oder. Spectral risk measures belong to the larger class of distortion risk measures.

\begin{definition}
	An increasing right-continuous function $\varphi:[0,1] \to [0,1]$ with $\varphi(0)=0$ and $\varphi(1)=1$ is called \emph{distortion function} and the functional $\rho_\varphi: L^0_{\ge 0} \to \bar \R$ with
		\[ \rho_\varphi(X)= \int_0^1 \q_X(u)\dif \varphi (u) \]
	is referred to as \emph{distortion risk measure}.
\end{definition}

In the special case of a spectral risk measure, the distortion function is given by
\begin{align}\label{eq:distortion}
	\varphi(u)= \int_0^u \phi(s) \dif s, \qquad u \in [0,1]
\end{align}
and is convex. This also shows that it is no restriction to assume $\phi$ being right continuous (as the right derivative of a convex function). Conversely, for a convex distortion function without a jump in $1$, which implies continuity on $[0,1]$, one can always find a representation as in \eqref{eq:distortion} with $\phi$ being a spectrum. Consequently, all distortion risk measures with convex and continuous distortion function are spectral. It has been proven by \citet{Dhaene2000} that the convexity of $\varphi$ is equivalent to $\rho_\varphi$ being subadditive. 

Note that $\rho_\phi$ is finite on $L^1_{\geq 0}$ if the spectrum $\phi$ is bounded. On $L^0_{\ge 0}$ the value $+\infty$ is possible. \citet{Shapiro2013} has shown that a finite risk measure on $L^1_{\geq 0}$ with all the properties in Definition \ref{def:rm_properties} is already spectral with bounded spectrum.

\begin{example}
	The most widely used spectral risk measure is \emph{Expected Shortfall}
	\[ \ES_{\alpha}(X)= \frac{1}{1-\alpha}\int_{\alpha}^1 \q_X(u) \dif u, \qquad \alpha \in [0,1). \]
	Its spectrum $\phi(u)=\frac{1}{1-\alpha}\1_{[\alpha,1]}(u)$ is bounded. Especially in optimization, an infimum representation of Expected Shortfall going back to \citet{RockafellarUryasev2000} is very useful:
	\begin{align}\label{eq:ES_inf}
		\ES_{\alpha}(X) = \inf_{q \in \R} \left\lbrace q + \frac{1}{1-\alpha} \E[(X-q)^+] \right\rbrace, \qquad X \in L^1_{\geq 0}.
	\end{align}
	The infimum is attained at $q=\q_X(\alpha)$.
\end{example}

Henceforth, we assume w.l.o.g.\ that $\phi$ is right-continuous. Then $\nu([0,t])
=\phi(t)$ defines a Borel measure on $[0,1]$. Let us define a further measure $\mu$ by $\frac{d \mu}{d \nu}(\alpha)=(1-\alpha)$. Every spectral risk measure can be expressed as a mixture of Expected Shortfall over different confidence levels, see e.g.\ Proposition 8.18 in \citet{McNeil2015}.

\begin{proposition}\label{thm:ES_mix}
	 Let $\rho_{\phi}$ be a spectral risk measure. Then $\mu$ is a probability measure on $[0,1]$ and $\rho_{\phi}$ has the representation
	\[ \rho_{\phi}(X) = \int_0^1 \ES_{\alpha}(X) \mu(\dif \alpha), \qquad X\in L^0_{\ge 0} \]
\end{proposition}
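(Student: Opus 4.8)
The plan is to derive both assertions from a single application of Tonelli's theorem, exploiting the fact that the factor $1-\alpha$ appearing in $\dif\mu/\dif\nu$ exactly cancels the factor $\frac{1}{1-\alpha}$ in the definition of $\ES_\alpha$. Throughout, non-negativity is what keeps everything clean: since $X \in L^0_{\ge 0}$ we have $\q_X \ge 0$, and $1-\alpha \ge 0$ on $[0,1]$, so all integrands are non-negative and Tonelli applies with no integrability concern (the value $+\infty$ being admissible on both sides).

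First I would check that $\mu$ is a probability measure. Non-negativity is immediate from $1-\alpha \ge 0$. For the total mass I would write $1 - \alpha = \int_0^1 \1_{\{\alpha \le u\}}\dif u$ and swap the order of integration:
\[ \mu([0,1]) = \int_0^1 (1-\alpha)\,\nu(\dif\alpha) = \int_0^1\!\!\int_0^1 \1_{\{\alpha \le u\}}\,\nu(\dif\alpha)\,\dif u = \int_0^1 \nu([0,u])\,\dif u = \int_0^1 \phi(u)\,\dif u = 1, \]
using $\nu([0,u]) = \phi(u)$ and the normalization of the spectrum. Note in passing that $\mu(\{1\}) = (1-1)\,\nu(\{1\}) = 0$, so the fact that $\ES_\alpha$ is only defined for $\alpha \in [0,1)$ causes no problem.

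For the representation itself, I would substitute the definitions, cancel the weights, and swap once more. Writing $\ES_\alpha(X) = \frac{1}{1-\alpha}\int_\alpha^1 \q_X(u)\,\dif u$ and $\mu(\dif\alpha) = (1-\alpha)\,\nu(\dif\alpha)$ yields
\[ \int_0^1 \ES_\alpha(X)\,\mu(\dif\alpha) = \int_0^1 \Big(\int_\alpha^1 \q_X(u)\,\dif u\Big)\nu(\dif\alpha) = \int_0^1 \q_X(u)\,\nu([0,u])\,\dif u = \int_0^1 \q_X(u)\,\phi(u)\,\dif u = \rho_\phi(X), \]
where the middle equality is Tonelli applied to $\1_{\{\alpha \le u\}}\q_X(u)$ on $[0,1]^2$ (with $\nu$ in the $\alpha$-coordinate and Lebesgue measure in the $u$-coordinate), and the final two equalities use $\nu([0,u]) = \phi(u)$ and the definition of $\rho_\phi$.

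The only real care needed, and the step I would expect to be the mild technical obstacle, is the measure-theoretic bookkeeping around the atoms of $\nu$. Since $\phi$ may jump (for Expected Shortfall at level $\alpha_0$ the spectrum jumps at $\alpha_0$, so $\nu$ is a point mass there), I must be consistent about collapsing the inner $\alpha$-integral to the closed interval $[0,u]$ rather than $[0,u)$; with the convention $\nu([0,t]) = \phi(t)$ built into the statement this is precisely $\phi(u)$, and the diagonal $\{\alpha = u\}$ is harmless because it is integrated over $u$ against Lebesgue measure. No further subtlety arises.
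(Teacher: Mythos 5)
Your proof is correct, and it fills a gap rather than duplicating anything: the paper states this proposition without proof, simply citing Proposition 8.18 of \citet{McNeil2015}. Your argument is essentially the canonical one behind that citation — a double application of Tonelli. The two key steps both check out: writing $1-\alpha=\int_0^1 \1_{\{\alpha\le u\}}\dif u$ and swapping gives $\mu([0,1])=\int_0^1\nu([0,u])\dif u=\int_0^1\phi(u)\dif u=1$; and in the representation, the density $1-\alpha$ of $\mu$ with respect to $\nu$ cancels the $\tfrac{1}{1-\alpha}$ in $\ES_\alpha$, after which the inner $\nu$-integral of $\1_{\{\alpha\le u\}}$ collapses to $\nu([0,u])=\phi(u)$. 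You also address exactly the three points that need care: Tonelli is legitimate with values in $[0,\infty]$ because $\q_X(u)\ge 0$ for a.e.\ $u$ when $X\in L^0_{\ge 0}$ (so the identity holds even when both sides are $+\infty$, which matters since the paper works on $L^0_{\ge 0}$ rather than $L^1$); $\mu(\{1\})=0$ disposes of the fact that $\ES_\alpha$ is undefined at $\alpha=1$; and the right-continuity convention $\nu([0,t])=\phi(t)$, which the paper fixes immediately before defining $\mu$, is precisely what makes the closed interval $[0,u]$ the correct collapse, the atoms of $\nu$ sitting on the diagonal being Lebesgue-negligible in $u$. The only cosmetic remark is that your phrase about the diagonal could be sharpened: the point is that replacing $\1_{\{\alpha\le u\}}$ by $\1_{\{\alpha<u\}}$ would change the inner integral only at the countably many jump points of $\phi$, a Lebesgue-null set of $u$'s — but this is exactly what you intend, so nothing is missing.
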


When we allow to take the supremum on the r.h.s.\ over all probability measures $\mu$ we would get the superclass of coherent risk measures, see \citet{kusuoka}.

Using Proposition \ref{thm:ES_mix}, the infimum representation \eqref{eq:ES_inf} of Expected Shortfall can be generalized to spectral risk measures.

\begin{proposition}\label{thm:spectral_inf_rep}
	Let $\rho_{\phi}$ be a spectral risk measure with bounded spectrum. We denote by $G$ the set of increasing convex functions $g:\R \to \R$. Then it holds for $X \in L^0_{\geq 0}$
	\[ \rho_{\phi}(X) = \inf_{g \in G} \left\lbrace \E[g(X)] + \int_0^1 g^*(\phi(u)) \dif u\right\rbrace,  \]
	where $g^*$ is the convex conjugate of $g \in G$.
\end{proposition}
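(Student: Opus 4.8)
The plan is to establish the two inequalities separately, using the Fenchel--Young inequality together with the quantile representation of the expectation. Throughout I will use that if $U$ is uniformly distributed on $[0,1]$ then $\q_X(U)$ has the same distribution as $X$, so that for every increasing (hence measurable, bounded below on the range of $X$) function $g$ one has $\E[g(X)] = \int_0^1 g(\q_X(u))\,\dif u$, with both sides possibly equal to $+\infty$. The inequality ``$\geq$'' will be routine; the attainment of the infimum, i.e.\ ``$\leq$'', will be the real work.

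For the lower bound, fix an arbitrary $g \in G$ and recall the Fenchel--Young inequality $g(x)+g^*(y)\geq xy$, valid for all $x,y\in\R$. Applying it pointwise with $x=\q_X(u)\ge 0$ and $y=\phi(u)\ge 0$ and integrating over $u\in[0,1]$ gives
\[
  \int_0^1 g(\q_X(u))\,\dif u + \int_0^1 g^*(\phi(u))\,\dif u \;\geq\; \int_0^1 \q_X(u)\,\phi(u)\,\dif u = \rho_\phi(X).
\]
Since $X\geq 0$ and $g$ is increasing, $g(X)\geq g(0)$ is bounded below, so the first integral equals $\E[g(X)]$ by the quantile identity. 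Taking the infimum over $g\in G$ yields ``$\geq$''. If $\rho_\phi(X)=+\infty$ this already forces both sides to be $+\infty$, so I may assume $\rho_\phi(X)<\infty$ henceforth.

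For the upper bound I construct an explicit minimizer designed to force equality in Fenchel--Young. The natural candidate is
\[
  g(x) := \int_0^x \phi\bigl(F_X(t)\bigr)\,\dif t, \qquad x\in\R.
\]
As $\phi$ and $F_X$ are increasing, $g'=\phi\circ F_X$ is increasing, so $g$ is convex; since $\phi\geq 0$, $g$ is also increasing. Because the spectrum is bounded one has $|g(x)|\leq \|\phi\|_\infty\,|x|<\infty$, so $g$ is real-valued and $g\in G$; this is the only point at which boundedness of $\phi$ is needed. It then remains to check that equality holds in Fenchel--Young along the quantile, i.e.\ that $\phi(u)\in\partial g(\q_X(u))$ for almost every $u\in[0,1]$, which turns the displayed lower-bound chain into a chain of equalities and hence shows that the infimum is attained at this $g$ with value $\rho_\phi(X)$.

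The main obstacle is precisely this subdifferential matching, which must be verified carefully at the discontinuities. At values of $u$ where $\q_X$ and $F_X$ are mutually inverse one has $F_X(\q_X(u))=u$ and therefore $g'(\q_X(u))=\phi(u)$ directly. The two degenerate cases are: an atom of $X$ (a jump of $F_X$, equivalently a flat piece of $\q_X$), where I must show that $\partial g$ at that point is a nondegenerate interval of slopes still containing $\phi(u)$; and a gap in the support of $X$ (a flat piece of $F_X$), which produces points lying outside the range of $\q_X$ and is thus irrelevant to the integral. Along the way one also needs $g^*(\phi(u))$ to be finite for a.e.\ $u$, which holds because $\phi(u)$ lies in the range of $\partial g$ and hence in the effective domain of $g^*$. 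Assembling these verifications upgrades the lower-bound inequality to an equality for this particular $g$, which completes ``$\leq$'' and the proof.
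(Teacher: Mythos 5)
Your proof is correct, but it follows a genuinely different route from the paper's. The paper only argues the non-integrable case from scratch---and does so by exactly your lower-bound mechanism, Fenchel--Young applied along the generalized distributional transform---while for $X\in L^1_{\geq 0}$ it simply invokes the result of \citet{Pichler2015}. You instead reconstruct the attainment part yourself; your candidate $g(x)=\int_0^x\phi(F_X(t))\,\dif t$ is precisely Pichler's minimizer $g_{\phi,X}$ of Remark \ref{rem:minimizer} up to an additive constant (both have derivative $\phi\circ F_X$ a.e.), and constants are immaterial since $(g+c)^*=g^*-c$, so $\E[(g+c)(X)]+\int_0^1(g+c)^*(\phi(u))\,\dif u=\E[g(X)]+\int_0^1 g^*(\phi(u))\,\dif u$. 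The step you defer as ``the main obstacle''---that $\phi(u)\in\partial g(\q_X(u))$---is indeed the crux, and it does hold, for \emph{every} $u\in(0,1)$ and without your three-way case distinction: writing $x=\q_X(u)$, the definition of the quantile gives $F_X(t)<u$ for all $t<x$, while right-continuity of $F_X$ gives $F_X(t)\geq F_X(x)\geq u$ for all $t>x$; since $\phi$ is increasing and the one-sided derivatives of $g$ are the one-sided limits of the increasing integrand $\phi\circ F_X$, this yields
\[ g'_-(x)=\lim_{t\uparrow x}\phi(F_X(t))\;\leq\;\phi(u)\;\leq\;\lim_{t\downarrow x}\phi(F_X(t))=g'_+(x), \]
so atoms of $X$ and gaps in its support need no separate treatment, and finiteness of $g^*(\phi(u))$ follows from the resulting Fenchel--Young equality at the finite point $\q_X(u)$. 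With that two-line verification supplied, your argument is complete and self-contained; what the paper's version buys instead is brevity and the explicit integral form of $g_{\phi,X}$, which it reuses later (Lemmas \ref{thm:G} and \ref{thm:num_conjugate}).
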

\begin{proof}
	For $X \in L^1_{\geq 0}$ the assertion has been proven by \citet{Pichler2015}. For non-integrable $X \in L^0_{\geq 0}$ it follows from Proposition \ref{thm:ES_mix}
	\[ \rho_{\phi}(X) = \int_0^1 \ES_{\alpha}(X) \mu(\dif \alpha) \geq \ES_{0}(X) =\E[X] =\infty. \]
	Now let $g \in G$ and $U_X \sim \mathcal{U}(0,1)$ be the generalized distributional transform of $X$, i.e.\ $\q_X(U_X)=X$ a.s. By the definition of the convex conjugate it holds $g(X) + g^*(\phi(U_X)) \geq X \phi(U_X)$. Hence, we have
	\begin{align*}
		\E[g(X)] + \E[g^*(\phi(U_X))]  \geq \E[X \, \phi(U_X)] = \E[\q_X(U_X)\, \phi(U_X)] = \rho_{\phi}(X) = \infty.
	\end{align*}
	Since $g \in G$ was arbitrary, the assertion follows.
\end{proof}

\begin{remark}\phantomsection\label{rem:minimizer}
		The proof by  \citet{Pichler2015} shows that for $X \in L^1_{\geq 0}$ the infimum is attained in  $g_{\phi,X}: \R \to \R$,
		$ g_{\phi,X}(x) = \int_0^1 \q_X(\alpha) + \frac{1}{1-\alpha}\left( x- \q_X(\alpha) \right)^+ \mu(\dif \alpha)$
		with $\mu$ from Proposition \ref{thm:ES_mix} and that the derivative of this function is $g_{\phi,X}'(x) =\phi(F_X(x))$ a.e.
\end{remark}

\section{Markov Decision Model}\label{sec:decision_model}

We consider the following standard Markov Decision Process with general Borel state and action space. The \emph{state space} $E$ is a Borel space with Borel $\sigma$-algebra $\B(E)$ and the \emph{action space} $A$ is a Borel space with Borel $\sigma$-Algebra $\B(A)$. The possible state-action combinations at time $n$ form a measurable subset $D_n$  of $E \times A$ such that $D_n$ contains the graph of a measurable mapping $E \to A$. The $x$-section of $D_n$,  
\[ D_n(x) = \{ a \in A: (x,a) \in D_n \}, \]
is the set of admissible actions in state $x \in E$ at time $n$. Note that the sets $D_n(x)$ are non-empty. We assume that the dynamics of the MDP are given by measurable \emph{transition functions} $T_n:D_n \times \Z \to E$ and depend on 
\emph{disturbances} $Z_1,Z_2,\dots$ which are independent random elements on a common probability space $(\Omega,\A,\P)$ with values in a measurable space $(\Z, \Zf)$. When the current state is $x_n$, the controller chooses action $a_n\in D_n(x_n)$ and $z_{n+1}$ is the realization of $Z_{n+1}$, then the next state is given by
\[ x_{n+1} = T_n(x_n,a_n,z_{n+1}). \] 
The \emph{one-stage cost function} $c_n:D_n\times E \to \R_+$ gives the cost $c_n(x,a,x')$  for choosing action $a$ if the system is in state $x$ at time $n$ and the next state is $x'$. The \emph{terminal cost function} $c_N: E \to \R_+$ gives the cost $c_N(x)$ if the system terminates in state $x$. Note that instead of non-negative cost we can equivalently consider cost which are bounded from below.

The model data is supposed to have the following continuity and compactness properties. 
\begin{assumption}\phantomsection\label{ass:continuity_compactness}
	\begin{enumerate}
		\item[(i)]  The sets $D_n(x)$ are compact and  $E \ni x \mapsto D_n(x)$ are upper semicontinuous, i.e. if $x_k\to x$ and $a_k\in D_n(x_k)$, $k\in\N$, then $(a_k)$ has an accumulation point in $D_n(x)$.
		\item[(ii)] The transition functions $T_n$ are continuous in $(x,a)$.
		\item[(iii)] The one-stage cost functions $c_n$ and the terminal cost function $c_N$ are lower semicontinuous. 
	\end{enumerate}
\end{assumption}

Under a finite planning horizon $N \in \N$, we consider the model data for $n=0,\dots,N-1$. The decision model is called \emph{stationary} if $D, T, c$ do not depend on $n$ and the disturbances are identically distributed.  If the model is stationary and the terminal cost is zero, we allow for an \emph{infinite time horizon} $N=\infty$.

For $n \in \N_0$ we denote by $\H_n$ the set of \emph{feasible histories} of the decision process up to time $n$
\begin{align*}
h_n = \begin{cases}
x_0, & \text{if } n=0,\\
(x_0,a_0,x_1, \dots, x_n), & \text{if } n \geq 1,
\end{cases}
\end{align*}
where $a_k \in D_k(x_k)$ for $k \in \N_0$. In order for the controller's decisions to be implementable, they must be based on the information available at the time of decision making, i.e.\ be functions of the history of the decision process. 
\begin{definition}
	\begin{enumerate}
		\item A measurable mapping $d_n: \mathcal{H}_n \to A$ with $d_n(h_n) \in D_n(x_n)$ for every $h_n \in \mathcal{H}_n$ is called  \emph{decision rule} at time $n$. A finite sequence $\pi=(d_0, \dots,d_{N-1})$ is called \emph{$N$-stage policy} and a sequence $\pi=(d_0, d_1, \dots)$ is called \emph{policy}.
		\item A decision rule at time $n$ is called \emph{Markov} if it  depends on the current state only, i.e.\ $d_n(h_n)=d_n(x_n)$ for all $h_n \in \mathcal{H}_n$. If all decision rules are Markov, the ($N$-stage) policy is called \emph{Markov}.
		\item An ($N$-stage) policy $\pi$ is called \emph{stationary} if $\pi=(d, \dots,d)$ or $\pi=(d,d,\dots)$, respectively, for some Markov decision rule $d$.
	\end{enumerate}
\end{definition}
With $\Pi$ and $\Pi^M$ we denote the sets of all policies and Markov policies respectively. It will be clear from the context if $N$-stage or infinite stage policies are meant. An admissible policy always exists as $D_n$ contains the graph of a measurable mapping.

Since risk measures are defined as real-valued mappings of random variables, we will work with a functional representation of the decision process. The law of motion does not need to be specified explicitly. We define for an initial state $x_0 \in E$ and a policy $\pi \in \Pi$
\begin{align*}
X^\pi_0=x_0, \qquad X^\pi_{n+1}= T(X_n^\pi,d_n(H_n^\pi),Z_{n+1}).
\end{align*}
Here, the process $(H_n^\pi)_{n \in \N_0}$ denotes the history of the decision process viewed as a random element, i.e.
\begin{align*}
H_0^\pi=x_0, \quad
H_1^\pi=\big(X_0^\pi,d_0(X_0^\pi),X_1^\pi\big),\quad \dots, \quad H_{n}^\pi=(H_{n-1}^\pi,d_{n-1}(H_{n-1}^\pi),X_{n}^\pi).
\end{align*}
Under a Markov policy the recourse on the random history of the decision process is not needed.

Even though the model is non-stationary we will explicitly introduce discounting by a factor $\beta >0$ since for the following state space extension it is relevant if there is discounting. Otherwise, stationary models with discounting would have to be treated separately. For a finite planning horizon $N \in \N$, the total discounted cost generated by a policy $\pi \in \Pi$ if the initial state is $x \in E$, is given by
\begin{align*}
	C_N^{\pi x} = \sum_{k=0}^{N-1} \beta^k c_k(X_k^\pi,d_k(H_k^\pi),X_{k+1}^\pi) + \beta^N c_N(X_N^\pi). 
\end{align*}
If the model is stationary and the planning horizon infinite, the total discounted cost is given by 
\begin{align*}
	C_\infty^{\pi x} = \sum_{k=0}^{\infty} \beta^k c(X_k^\pi,d_k(H_k^\pi),X_{k+1}^\pi). 
\end{align*}
For a generic total cost regardless of the planning horizon we write $C^{\pi x}$. Our aim is to find a policy $\pi \in \Pi$ which attains
\begin{align}
	\inf_{\pi \in \Pi} \rho_\phi(C_N^{\pi x}) \label{eq:opt_crit_finite}\\
	\intertext{or}
	\inf_{\pi \in \Pi} \rho_\phi(C_\infty^{\pi x}), \label{eq:opt_crit_infinite}
\end{align} 
respectively, for a fixed spectral risk measure $\rho_{\phi}: L^0_{\ge 0} \to \bar \R$ with $\phi(1)<\infty$, i.e.\ $\phi$ is bounded. We can apply Proposition \ref{thm:spectral_inf_rep} to reformulate the optimization problems \eqref{eq:opt_crit_finite} and \eqref{eq:opt_crit_infinite} to
\begin{align}\label{eq:outer_problem}
	\inf_{\pi \in \Pi} \rho_\phi \left( C^{\pi x} \right) &= \inf_{\pi \in \Pi} \inf_{g \in G} \left\lbrace \E[g(C^{\pi x})] + \int_0^1 g^*(\phi(u)) \dif u\right\rbrace \notag\\
	&= \inf_{g \in G}\inf_{\pi \in \Pi}  \left\lbrace \E[g(C^{\pi x})] + \int_0^1 g^*(\phi(u)) \dif u\right\rbrace \notag\\
	&= \inf_{g \in G} \left\lbrace \inf_{\pi \in \Pi} \E[g(C^{\pi x})] + \int_0^1 g^*(\phi(u)) \dif u\right\rbrace,
\end{align}
For fixed $g \in G$ we will refer to 
\begin{align}\label{eq:inner_problem}
	\inf_{\pi \in \Pi} \E[g(C^{\pi x})]
\end{align}
as \emph{inner optimization problem}. In the two following sections we, solve \eqref{eq:inner_problem} as an ordinary MDP on an extended state space. If $C^{\pi x} \in L^0_{\ge 0}$ but not in $L^1$, then $\rho_\phi(C^{\pi x})=\infty$. These policies are not interesting and can be excluded from the optimization.

Since an increasing convex function $g:\R \to \R$ can be viewed as a disutility function, optimality criterion \eqref{eq:inner_problem} implies that the expected disutility of the total discounted cost in minimized. If $g$ is strictly increasing, the optimization problem is not changed by applying $g^{-1}$, i.e.\ minimizing the corresponding certainty equivalent $g^{-1}\big(\E[g(C^{\pi x})]\big)$. For bounded one-stage cost functions, such problems are solved in \citet{BauerleRieder2014}. The special case of the exponential disutility function $g(x) = \exp(\gamma x), \ \gamma >0,$ has been studied first by \citet{HowardMatheson1972} in a decision model with finite state and action space. The term \emph{risk-sensitive MDP} goes back to them. The certainty equivalent corresponding to an exponential disutility is the entropic risk measure 
\[ \rho(X)= \frac{1}{\gamma} \log \E\left[ e^{\gamma X}  \right]. \]
It has been shown by \citet{Mueller2007} that an exponential disutility is the only case where the certainty equivalent defines a monetary risk measure apart from expectation itself (linear disutility).  

The concepts of spectral risk measures and expected disutilities (or corresponding certainty equivalents) can be combined to so-called \emph{rank-dependent expected disutilities} of the form $\rho_{\phi}(u(X))$, where $u$ is a disutility function. The corresponding certainty equivalent is $u^{-1}\big(\rho_{\phi}(u(X))\big)$. In fact, this concept works more generally for distortion risk measures and incorporates both expected disutilities (identity as distortion function) and distortion risk measures (identity as disutility function). The idea is that the expected disutility is calculated w.r.t.\ a distorted probability instead of the original probability measure. As long as the distorted probability is spectral, using a rank dependent disutility instead of $\rho_{\phi}$ leads to structurally the same inner problem as \eqref{eq:inner_problem}, only $g$ is replaced by $g(u(\cdot))$. Our results apply here, too. The certainty equivalent of a rank-dependent expected disutility combining an exponential disutility with a spectral risk measure is itself a convex (but not coherent) risk measure. It has been introduced by \citet{TsanakasDesli2003} as \emph{distortion-exponential risk measure}.

\section{Inner Problem: Finite Planning Horizon}\label{sec:finite}

Under a finite planning horizon $N \in \N$, we consider the non-stationary version of the decision model and our aim is to solve
\begin{align}\label{eq:inner_problem_finite}
	\inf_{\pi \in \Pi} \E[g(C_N^{\pi x})]
\end{align}
for an arbitrary but fixed increasing convex function $g \in G$.
We assume that for all $x\in E$ there is at least one policy $\pi$ s.t. $C_N^{\pi x}\in L^1$. Problem \eqref{eq:inner_problem_finite} is well-defined since the target function is bounded from below by $g(0)$. W.l.o.g.\ we assume $g\ge 0$. Note that the value $+\infty$ is possible. 

As the functions $g \in G$ are in general non-linear, the optimization problem cannot be solved with MDP techniques directly. This can be overcome by extending the state space to
\[ \wh E = E \times \R_+  \times (0,\infty) \]
with corresponding Borel $\sigma$-algebra following \citet{BauerleRieder2014}. A generic element of $\wh E$ is denoted by $(x,s,t)$. The idea is that $s$ summarizes the cost accumulated to far and that $t$ keeps track of the discounting. The action space $A$ and the admissible state-action combinations $D_n$, $n=0,\dots,N-1$ remain unchanged. Formally, one defines 
\[ \wh D_n = \{ (x,s,t,a) \in \wh E \times A: \ a \in D_n(x) \}, \qquad n=0,\dots,N-1 \]
implying $\wh D_n(x,s,t) = D_n(x),\ (x,s,t) \in \wh E$.  
The transition function on the new state space is given by $\wh T_n: \wh D_n \times \Z \to \wh E$, 
\[\wh T_n(x,s,t,a,z) = \begin{pmatrix}
	T_n(x,a,z)\\
	s+t c_n(x,a,T_n(x,a,z))\\
	\beta t
\end{pmatrix}, \qquad n=0,\dots,N-1. \] 
Feasible histories of the decision model with extended state space up to time $n$ have the form
\[ h_n = \begin{cases}
	(x_0,s_0,t_0), &  n=0,\\
	(x_0,s_0,t_0,a_0,x_1,s_1,t_1,a_1, \dots, x_n,s_n,t_n), & 
	n \geq 1,
\end{cases} \]
where $a_k \in \wh D_k(x_k,s_k,t_k)$, $k=0,\dots,N-1$, and the set of such histories is denoted by $\wh \H_n$. With $\wh \Pi$ and $\wh \Pi^M$ we denote the sets of history-dependent and Markov policies for the decision model with extended state space. We will write $\E_{n h_n}$ for a conditional expectation given $H_n^\pi=h_n, \ h_n \in \wh \H_n$. The value of a policy $\pi \in \wh\Pi$ at time $n=0,\dots,N$ is defined as 
\begin{align}\label{eq:policy_value}
	\begin{aligned}
		V_{N\pi}(h_N) &= g(s_N+ t_Nc_N(x_N)),\\
		V_{n\pi}(h_n) &= \E_{nh_n}\left[g\left(s_n + t_n\left(\sum_{k=n}^{N-1} \beta^{k-n} c_k(X_k^\pi,d_k(H_k^\pi),X_{k+1}^\pi) + \beta^{N-n} c_N(X_N^\pi)\right) \right)\right], 
	\end{aligned}
\end{align}
where $h_n \in \wh H_n$. The corresponding value functions are 
\begin{align}\label{eq:value_function}
	V_{n}(h_n) = \inf_{\pi \in \wh\Pi} V_{n\pi}(h_n), \qquad h_n \in \wh\H_n.
\end{align}
In the end, the quantity of interest is $V_0(x,0,1)$ which agrees with the infimal value of the original inner optimization problem \eqref{eq:inner_problem_finite}. But how do we get an optimal policy for problem \eqref{eq:inner_problem_finite}? When starting in $(x_0,0,1) \in \wh E$, a history $(x_0,a_0,x_1,a_1,\dots,x_N) \in \H_N$ of the original decision model uniquely determines the history $(x_0,s_0,t_0,a_0,x_1,s_1,t_1,a_1, \dots, x_N,s_N,t_N) \in \wh \H_N$ of the decision model with extended state space through
\[ s_n = \sum_{k=0}^{n-1} \beta^k c_k(x_k,a_k,x_{k+1}) \quad \text{and} \quad t_n=\beta^n, \qquad n=0,\dots,N. \]
Hence, for the initial state $(x_0,0,1) \in \wh E$, a Markov policy $\pi=(d_0,\dots,d_{N-1}) \in \wh \Pi^M$ with $d_n:\wh E \to \A$, which will turn out to be optimal for \eqref{eq:value_function}, can be perceived as a history-dependent policy $\pi'=(d_0',\dots,d_{N-1}') \in \Pi$ of the original decision model, since we can find measurable functions 
$d_n':\H_n \to A$ satisfying $d_n'(h_n) \in D_n(x_n)$ and
\[ d_n'(x_0,a_0,x_1,\dots,x_n) = d_n\left(x_n, \sum_{k=0}^{n-1} \beta^k c_k(x_k,a_k,x_{k+1}), \beta^{n}  \right). \]
Analogously, a history-dependent policy $\pi \in \wh \Pi$ can be regarded as a history-dependent policy of the original decision model. We can now proceed to deriving an iteration for the policy values \eqref{eq:policy_value}. 
 
\begin{proposition}\label{thm:value_iteration}
	The value of a policy $\pi \in \wh \Pi$ can be calculated recursively for $n=0,\dots,N-1$ and $h_n \in \wh\H_n$ as
	\begin{align*}
		V_{N\pi}(h_N) &= g(s_N+t_Nc_N(x_N))\\
		V_{n\pi}(h_n) &= \E\Big[V_{n+1\pi}\Big(h_n,\, d_n(h_n),\, \wh T_n(x_n,s_n,t_n,d_n(h_n),Z_{n+1})  \Big)\Big]\\
		&=\E\Big[V_{n+1\pi}\Big(h_n,\, d_n(h_n),\, T_n(x_n,d_n(h_n),Z_{n+1}),\\
		&\phantom{=\E\Big[V_{n+1\pi}\Big(}\  s_n + t_n c_n(x_n,d_n(h_n),T_n(x_n,d_n(h_n),Z_{n+1})),\, \beta t  \Big)\Big].
	\end{align*}
\end{proposition}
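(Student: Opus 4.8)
The plan is to verify the recursion directly from the definition \eqref{eq:policy_value} of the policy value, using the tower property of conditional expectation together with the independence of the disturbances. The terminal identity $V_{N\pi}(h_N) = g(s_N + t_N c_N(x_N))$ is simply the first line of \eqref{eq:policy_value}, so only the one-step relation for $n \le N-1$ requires an argument. Note that this is a direct verification, not an induction: both $V_{n\pi}$ and $V_{n+1\pi}$ are given explicitly in \eqref{eq:policy_value}, and one only has to relate them.

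First I would isolate the $k=n$ summand inside $g$. Splitting off the first term of the sum in the argument of $g$ in $V_{n\pi}(h_n)$ and factoring a $\beta$ out of the remaining tail gives
\[ s_n + t_n c_n(X_n^\pi, d_n(H_n^\pi), X_{n+1}^\pi) + \beta t_n\left( \sum_{k=n+1}^{N-1} \beta^{k-(n+1)} c_k(X_k^\pi, d_k(H_k^\pi), X_{k+1}^\pi) + \beta^{N-(n+1)} c_N(X_N^\pi) \right). \]
Setting $s_{n+1} = s_n + t_n c_n(X_n^\pi, d_n(H_n^\pi), X_{n+1}^\pi)$ and $t_{n+1} = \beta t_n$ — which is exactly the update encoded by $\wh T_n$ — this expression becomes the argument of $g$ appearing in $V_{n+1\pi}$ evaluated at the history $H_{n+1}^\pi$. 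Thus the algebra shows that the extended components $(s_n,t_n)$ correctly bookkeep the accumulated discounted cost and the running discount factor, and that the sole purpose of the state-space extension is to make this decomposition exact.

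Next I would apply the tower property, conditioning on $H_{n+1}^\pi$ given $H_n^\pi = h_n$. Since we may assume $g \ge 0$ without loss of generality, every expectation takes values in $[0,\infty]$ and the interchange of conditional expectations is justified without any integrability hypothesis. By the decomposition above together with \eqref{eq:policy_value}, the inner conditional expectation given $H_{n+1}^\pi = h_{n+1}$ equals $V_{n+1\pi}(h_{n+1})$; here the independence of $Z_{n+2}, Z_{n+3}, \dots$ from the past is what lets the conditional expectation given the full history $h_{n+1}$ reduce to $\E_{n+1\,h_{n+1}}$. Finally, conditional on $H_n^\pi = h_n$ the action $d_n(h_n)$ is determined, and because $Z_{n+1}$ is independent of the past, the next history is the deterministic function $H_{n+1}^\pi = \big(h_n,\, d_n(h_n),\, \wh T_n(x_n,s_n,t_n,d_n(h_n),Z_{n+1})\big)$ of $Z_{n+1}$ alone. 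Substituting this into $\E_{nh_n}[V_{n+1\pi}(H_{n+1}^\pi)]$ turns the conditional expectation into an ordinary expectation over $Z_{n+1}$ and yields the claimed recursion, the second displayed form following by unfolding $\wh T_n$ into its components.

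I expect the main obstacle to be bookkeeping rather than conceptual: one must track carefully how the index shift in the discount powers $\beta^{k-n}$ interacts with the update $t_{n+1} = \beta t_n$, and confirm that the measurability of the decision rules and transition functions makes $V_{n+1\pi}$ a measurable function of $h_{n+1}$ so that the outer expectation is well defined. Because the disturbances are independent and $g$ is non-negative, no delicate Fubini or integrability argument is needed, and the whole proof reduces to the algebraic decomposition of the cost plus a single application of the tower property.
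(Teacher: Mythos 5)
Your proof is correct and follows essentially the same route as the paper's: split off the $k=n$ cost term so that the $(s,t)$-update appears exactly as $\wh T_n$, then apply the tower property so that the inner conditional expectation is recognized as $V_{n+1\pi}$ evaluated at the next history. The paper nominally phrases this as backward induction, but as you observe the induction hypothesis is never substantively invoked (the step only relates the explicit definitions of $V_{n\pi}$ and $V_{n+1\pi}$), so your direct-verification framing is the same argument with the measurability, independence and non-negativity justifications made slightly more explicit.
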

\begin{proof}
	The proof is by backward induction. At time $N$ there is nothing to show. Now assume the assertion holds for $n+1$, then the tower property of conditional expectation yields
	\begin{align*}
		V_{n\pi}(h_n) &= \E_{nh_n}\Bigg[g\Bigg(s_n + t_n\Bigg(\sum_{k=n}^{N-1} \beta^{k-n} c_k(X_k^\pi,d_k(H_k^\pi),X_{k+1}^\pi) + \beta^{N-n} c_N(X_N^\pi) \Bigg)\Bigg)\Bigg]\\
		&= \E_{nh_n}\Bigg[g\Bigg(s_n + t_n c_n(x_n,d_n(h_n),T_n(x_n,d_n(h_n),Z_{n+1}))\\
		&\phantom{= \E_{nh_n}\Bigg[g\Bigg(}\ + t_n \beta \Bigg(\sum_{k=n+1}^{N-1} \beta^{k-(n+1)} c_k(X_k^\pi,d_k(H_k^\pi),X_{k+1}^\pi) + \beta^{N-(n+1)} c_N(X_N^\pi) \Bigg)\Bigg)\Bigg]\\
		&= \E_{nh_n}\Bigg[ \E_{n+1(h_n,d_n(h_n),\wh T_n(x_n,s_n,t_n,d_n(h_n),Z_{n+1}))} \Bigg[\\
		&\phantom{= \E_{nh_n}\Bigg[\E}\ g\Bigg(s_n + t_n c_n(x_n,d_n(h_n),T_n(x_n,d_n(h_n),Z_{n+1}))\\
		&\phantom{= \E_{nh_n}\Bigg[\E g\Bigg(}\ +  t_n \beta \Bigg(\sum_{k=n+1}^{N-1} \beta^{k-(n+1)} c_k(X_k^\pi,d_k(H_k^\pi),X_{k+1}^\pi) + \beta^{N-(n+1)} c_N(X_N^\pi) \Bigg)\Bigg)\Bigg] \Bigg]\\
		&=\E\Bigg[ V_{n+1\pi}\Big(h_n,\,d_n(h_n),\, \wh T_n(x_n,s_n,t_n,d_n(h_n),Z_{n+1})  \Big) \Bigg]. \qedhere
	\end{align*} 
\end{proof}

\begin{remark}
	If there is no discounting or if the discounting is included in the non-stationary one-stage cost functions, the second summary variable $t$ is obviously not needed. In the special case that $\rho_{\phi}$ is the Expected Shortfall, one only has to consider the functions $g_q(x)= (x-q)^+, \ q \in \R$. Due to their positive homogeneity in $(x,q)$, it suffices to extend the state space by only one real-valued summary variable even if there is discounting, cf.\ \citet{BaeuerleOtt2011}. 
\end{remark}

Let us now consider specifically Markov policies $\pi \in \wh \Pi^M$. The function space
\begin{align*}
	\MM = \big\{ v: \wh E \to \R_+\mid \ &  v \text{ is lower semicontinuous,}\\
	& v(x,\cdot,\cdot) \text{ is  increasing for all } x \in E,\\
	& v(x,s,t) \geq g(s) \text{ for } (x,s,t) \in \wh E \big\}
\end{align*} 
turns out to be the set of potential value functions under such policies. In order to simplify the notation, we introduce the usual operators on $\MM$. All $v \in \MM$ are non-negative and thus at least quasi-integrable.

\begin{definition} For $v \in \MM$ and a Markov decision rule $d:\wh\E \to A$ we define
	\begin{align*}
		L_n v(x,s,t,a) &= \E\Big[ v\Big( \wh T_n(x,s,t,a,Z_{n+1})\Big)\Big]\\
		&=\E\Big[ v\Big(T_n(x,a,Z_{n+1}),\, s+tc_n(x,a,T_n(x,a,Z_{n+1})),\, \beta t\Big) \Big], &&(x,s,t,a) \in \wh D_n,\\
		\T_{nd} v(x,s,t) &= L_n v(x,s,t,d(x,s,t)), && (x,s,t) \in \wh E,\\
		\T_n v(x,s,t) &= \inf_{a \in D(x)} L_n v(x,s,t,a), && (x,s,t) \in \wh E.
	\end{align*}
\end{definition}

Note that the operators are monotone in $v$. Under a Markov policy $\pi=(d_0,\dots,d_{N-1})\in \wh \Pi^M$ the value iteration can be expressed with the operators. In order to distinguish from the history-depended case, we denote the Markov value functions with $J$. Setting $J_{N\pi}(x,s,t)=g(s + t c_N(x)), \ (x,s,t) \in \wh E$, we obtain for $n=0,\dots,N-1$ and $(x,s,t) \in \wh E$
\begin{align*}
	J_{n\pi}(x,s,t) &= \E\Big[J_{n+1\pi}\Big(T_n(x,d_n(x),Z_{n+1}),\, s + t c_n(x,d_n(x),T_n(x,d_n(x),Z_{n+1})),\, \beta t  \Big)\Big]\\
	&= \T_{nd_n} J_{n+1\pi}(x,s,t).
\end{align*}
The corresponding Markov value functions are defined for $n=0, \dots, N$ as
\begin{align*}
	J_{n}(x,s,t) = \inf_{\pi \in \Pi^M} J_{n\pi}(x,s,t) , \qquad (x,s,t) \in \wh E.
\end{align*}

The next result shows that $V_n$ satisfies a Bellman equation and proves that an optimal policy exists and is Markov.

\begin{theorem}\label{thm:finite}
Let Assumption \ref{ass:continuity_compactness} be satisfied. Then, for $n=0, \dots, N$ the value functions $V_n$ only depend on $(x_n,s_n,t_n)$, i.e.\ $V_n(h_n)=J_n(x_n,s_n,t_n)$ for all $h_n \in \wh \H_n$, lie in $\MM$ and satisfy the Bellman equation
	\begin{align*}
		J_N(x,s,t) &= g(s+tc_N(x)),\\
		J_n(x,s,t) &= \T_n J_{n+1}(x,s,t), \qquad (x,s,t) \in \wh E.
	\end{align*}
	Furthermore, for $n= 0, \dots, N-1$ there exist Markov decision rules $d_n^*:\wh E \to A$ with $\T_{nd_n^*} J_{n+1}=\T_{n} J_{n+1}$ and every sequence of such minimizers constitutes an optimal policy $\pi=(d_0^*,\dots,d_{N-1}^*) \in \wh \Pi^M$ for problem \eqref{eq:value_function}.
\end{theorem}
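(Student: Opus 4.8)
The plan is to proceed by backward induction on $n$ from $N$ down to $0$, establishing simultaneously that $V_n(h_n)=J_n(x_n,s_n,t_n)$ with $J_n\in\MM$, that $J_n=\T_n J_{n+1}$, and that a measurable minimizer $d_n^*$ of $L_n J_{n+1}$ exists such that the tail policy $(d_n^*,\dots,d_{N-1}^*)$ is optimal from stage $n$ onwards. The base case $n=N$ is immediate: $J_N(x,s,t)=g(s+tc_N(x))$ depends only on the extended state, and it lies in $\MM$ because $g$ is continuous and increasing (a finite convex function on $\R$ is continuous), because $(x,t)\mapsto t\,c_N(x)$ is lower semicontinuous on $E\times(0,\infty)$ by Assumption \ref{ass:continuity_compactness}(iii) together with $t>0$ and $c_N\ge 0$, so that $s+tc_N(x)$ is lower semicontinuous and hence so is its composition with the increasing continuous $g$; monotonicity in $(s,t)$ and the bound $J_N(x,s,t)\ge g(s)$ follow at once from $c_N\ge 0$ and $g$ increasing.

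The core of the argument is a structural lemma: \emph{if $v\in\MM$, then $L_n v$ is lower semicontinuous on $\wh D_n$, the function $\T_n v$ lies in $\MM$, and there is a measurable decision rule $d_n^*:\wh E\to A$ with $\T_{nd_n^*}v=\T_n v$.} For the lower semicontinuity of $L_n v$ I would take $(x_k,s_k,t_k,a_k)\to(x,s,t,a)$ in $\wh D_n$ and apply Fatou's lemma (legitimate since $v\ge 0$) to reduce the claim to lower semicontinuity of the integrand for each fixed disturbance value $z$. Continuity of $T_n$ (Assumption \ref{ass:continuity_compactness}(ii)) handles the first and third coordinates, while the accumulated-cost coordinate $s_k+t_k c_n(x_k,a_k,T_n(x_k,a_k,z))$ only satisfies $\liminf_k\,(\cdot)\ge s+t\,c_n(x,a,T_n(x,a,z))$ because $c_n$ is merely lower semicontinuous. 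The missing continuity is absorbed precisely by the monotonicity of $v$ in its $s$-argument built into $\MM$: an increasing lower semicontinuous function is left-continuous in $s$, and combined with lower semicontinuity this converts the one-sided bound $\liminf_k s_k\ge s$ into $\liminf_k v(x_k,s_k,t_k)\ge v(x,s,t)$. Lower semicontinuity of $\T_n v$ and the existence of the selector $d_n^*$ then follow from the standard measurable selection theorem for the infimum of a lower semicontinuous function over the compact, upper semicontinuous correspondence $x\mapsto D_n(x)$ of Assumption \ref{ass:continuity_compactness}(i), as used in \citet{BauerleRieder2014}. Monotonicity of $\T_n v$ in $(s,t)$ is inherited from $L_n v$ (since $c_n\ge 0$, $\beta>0$ and $v$ is increasing in both summary variables), and the bound $\T_n v(x,s,t)\ge g(s)$ follows from $v(\cdot,s',\cdot)\ge g(s')$ applied inside the expectation together with $c_n\ge 0$.

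Granting the lemma, the inductive step splits into a lower and an upper bound. For the lower bound, I would fix an arbitrary $\pi\in\wh\Pi$, expand $V_{n\pi}(h_n)$ by one step using Proposition \ref{thm:value_iteration}, bound $V_{n+1\pi}\ge V_{n+1}=J_{n+1}$ by the induction hypothesis and monotonicity of the expectation, and recognize the result as $L_n J_{n+1}(x_n,s_n,t_n,d_n(h_n))\ge\T_n J_{n+1}(x_n,s_n,t_n)$, the admissible action being $d_n(h_n)\in D_n(x_n)$; taking the infimum over $\pi$ yields $V_n(h_n)\ge\T_n J_{n+1}(x_n,s_n,t_n)$. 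For the upper bound, I would insert the Markov policy $\pi^*=(d_n^*,d_{n+1}^*,\dots,d_{N-1}^*)$; since the Markov value iteration keeps $V_{n\pi^*}(h_n)$ a function of the current extended state, $V_{n\pi^*}(h_n)=\T_{nd_n^*}J_{n+1\pi^*}(x_n,s_n,t_n)=\T_{nd_n^*}J_{n+1}(x_n,s_n,t_n)=\T_n J_{n+1}(x_n,s_n,t_n)$, using the inductive optimality $J_{n+1\pi^*}=J_{n+1}$ and the defining property of $d_n^*$. The two bounds give $V_n(h_n)=\T_n J_{n+1}(x_n,s_n,t_n)=:J_n(x_n,s_n,t_n)$, closing the induction and identifying $\pi^*$ as optimal.

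The main obstacle is the structural lemma, and within it the lower semicontinuity of $L_n v$. Because the running cost $c_n$ is only assumed lower semicontinuous, the accumulated-cost coordinate is not continuous in the state-action variable, so lower semicontinuity cannot be propagated through the expectation by continuity alone. The resolution — and the reason $\MM$ is defined with the monotonicity-in-$s$ requirement rather than as a space of arbitrary lower semicontinuous functions — is exactly that monotonicity upgrades the one-sided control $\liminf_k s_k\ge s$ into the inequality needed to apply Fatou. Everything else reduces to verifying that the three defining properties of $\MM$ are stable under $L_n$ and under $\inf_{a\in D_n(x)}$, plus the appeal to measurable selection.
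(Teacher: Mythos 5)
Your proposal is correct and follows essentially the same route as the paper's proof: backward induction with the identical base case, Fatou's lemma plus the measurable-selection result (Proposition 2.4.3 in B\"auerle--Rieder) to get lower semicontinuity of $L_n v$, existence of a minimizing Markov decision rule, closedness of the class of value functions under $\T_n$, and the same lower/upper-bound sandwich using the inductively optimal Markov tail policy. Your explicit observation that monotonicity in $s$ (hence left-continuity, for an increasing lower semicontinuous function) is what lets the merely lower semicontinuous accumulated-cost coordinate pass through $v$ is precisely the detail on which the paper's unelaborated claim that the integrand is lower semicontinuous rests.
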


\begin{proof}
	The proof is by backward induction. At time $N$ we have $V_N(h_N)=J_N(x_N,s_N,t_N)=g(s_N+t_Nc_N(x_N)), \ h_N \in \wh \H_n$, which is 
	\begin{itemize}
		\item lower semicontinuous since $g$ is increasing and continuous (as a convex function on $\R$) and $c_N$ is lower semicontinuous,
		\item increasing in $(s_N,t_N)$ since $g$ is increasing  and $c_N$ is non-negative,
		\item bounded below by $g(s_N)$ since $g$ is increasing and $t_N c_N(x_N) \geq 0$,
	\end{itemize}
	i.e.\ in $\MM$. Assuming the assertion holds at time $n+1$ we have at time $n$ for $h_n \in \wh\H_n$
	\begin{align}
		V_n(h_n) &= \inf_{\pi \in \wh \Pi} V_{n\pi}(h_n)\notag\\
		&= \inf_{\pi \in \wh \Pi} \E\Big[V_{n+1\pi}\Big(h_n,\, d_n(h_n),\, \wh T_n(x_n,s_n,t_n,d_n(h_n),Z_{n+1})  \Big)\Big]\notag\\
		&\geq \inf_{\pi \in \wh \Pi} \E\Big[V_{n+1}\Big(h_n,\, d_n(h_n),\, \wh T_n(x_n,s_n,t_n,d_n(h_n),Z_{n+1})  \Big)\Big]\notag\\
		&= \inf_{\pi \in \wh \Pi} \E\Big[J_{n+1}\Big(\wh T_n(x_n,s_n,t_n,d_n(h_n),Z_{n+1})  \Big)\Big].\notag\\
		&= \inf_{a_n \in D_n(x_n)} \E\Big[J_{n+1}\Big(\wh T_n(x_n,s_n,t_n,a_n,Z_{n+1})  \Big)\Big].\label{eq:finite_proof_0}
	\end{align}
	The last equality holds since the minimization does not depend on the entire policy but only on $a_n=d_n(h_n)$. Here, objective and constraint depend on the history of the process only through $x_n$. Thus, given existence of a minimizing Markov decision rule $d_n^*$, \eqref{eq:finite_proof_0} equals $\T_{n d_n^*} J_{n+1}(x_n,s_n,t_n)$. Again by the induction hypothesis, there exists an optimal Markov policy $\pi^* \in \Pi^M$ such that $J_{n+1}=J_{n+1\pi^*}$. Hence, we have
	\begin{align*}
		V_n(h_n) \geq \T_{n d_n^*} J_{n+1\pi^*}(x_n,s_n,t_n) = J_{n\pi^*}(x_n,s_n,t_n) \geq J_{n}(x_n,s_n,t_n) \geq V_n(h_n).
	\end{align*}
	It remains to show the existence of a minimizing Markov decision rule $d_n^*$ and that $J_n \in \MM$. We want to apply Proposition 2.4.3 of \citet{BaeuerleRieder2011}. The set-valued mapping $\wh E \ni (x,s,t)\mapsto D_n(x)$ is compact-valued and upper semicontinuous. Next, we show that $\wh D_n \ni (x,s,t,a) \mapsto  L_n v (x,s,t,a)$ is lower semicontinuous for every $v \in \MM$. Let $\{(x_k,s_k,t_k,a_k)\}_{k \in \N}$ be a convergent sequence in $\wh D_n$ with limit $(x^*,s^*,t^*,a^*) \in \wh D_n$. The mapping 
	\[\wh D_n \ni (x,s,t,a) \mapsto v\Big(T_n(x,a,Z_{n+1}(\omega)),\, s+tc_n(x,a,T_n(x,a,Z_{n+1}(\omega))),\, \beta t\Big)   \]
	is lower semicontinuous. Since $v \geq g \geq 0$, we can apply Fatou's Lemma which yields 
	\begin{align*}
		&\liminf_{k \to \infty} L_nv(x_k,s_k,t_k,a_k)\\
		&= \liminf_{k \to \infty} \E\Big[ v\Big(T_n(x_k,a_k,Z_{n+1}),\, s_k+t_kc_n(x_k,a_k,T_n(x_k,a_k,Z_{n+1})),\, \beta t_k\Big) \Big]\\
		&\geq \E\Big[\liminf_{k \to \infty}  v\Big(T_n(x_k,a_k,Z_{n+1}),\, s_k+t_kc_n(x_k,a_k,T_n(x_k,a_k,Z_{n+1})),\, \beta t_k\Big) \Big]\\
		&\geq \E\Big[ v\Big(T_n(x^*,a^*,Z_{n+1}),\, s^*+t^*c_n(x^*,a^*,T_n(x^*,a^*,Z_{n+1})),\, \beta t^*\Big) \Big]\\
		&= L_nv(x^*,s^*,t^*,a^*).
	\end{align*}
	I.e.\ $L_n v$ is lower semicontinuous. With Proposition 2.4.3 of \citet{BaeuerleRieder2011} follows the existence of a minimizing decision rule $d_n^*$ and the lower semicontinuity of $\T_nv$.
	
	Now fix $x \in E$. The fact that $(s,t) \mapsto \T_n v(x,s,t)$ is increasing follows as in Theorem 2.4.14 in \citet{BaeuerleRieder2011}. The inequality $\T_n v(x,s,t) \geq g(s), \ (x,s,t) \in \wh E,$ is obvious. Taken together, we have $\T_n v \in \MM$ and the proof is complete.
\end{proof}

\begin{remark}
	From Theorem \ref{thm:finite} it follows that the sequence $\{(x_n,s_n,t_n)\}_{n=0}^{N-1}$ with
	\[ (s_n,t_n) = \left( \sum_{k=0}^{n-1} \beta^k c_k(x_k,a_k,x_{k+1}),\, \beta^n \right) \]
	is a \emph{sufficient statistic} of the decision model with the original state space in the sense of \citet{Hinderer1970}.
\end{remark}

\section{Inner Problem: Infinite Planning Horizon}\label{sec:infinite}

In this section, we consider the inner optimization problem \eqref{eq:inner_problem} of the risk-sensitive total cost minimization under an infinite planning horizon. This is reasonable if the terminal period is unknown or if one wants to approximate a model with a large but finite planning horizon. Solving the infinite horizon problem will turn out to be easier since it admits a stationary optimal policy. 

We study the stationary version of the decision model with no terminal cost, i.e.\ $D, T,c$ do not depend on $n$, $c_N\equiv 0$ and the disturbances are identically distributed. Let $Z$ be a representative of the disturbance distribution. Our aim is to solve
\begin{align}\label{eq:inner_problem_infinite}
	\inf_{\pi \in \Pi} \E[g(C_\infty^{\pi x})]
\end{align}
for an arbitrary but fixed increasing convex function $g \in G$. As in the previous section we assume w.l.o.g.\ that $g\ge 0$ and that for all $x\in E$ there exists a policy $\pi$ such that $C_\infty^{\pi x}\in L^1$.

The remarks in Section \ref{sec:finite} regarding connections to the minimization of (rank-dependent) expected disutilities and corresponding certainty equivalents apply in the infinite horizon case as well.

In order to obtain a value iteration, the state space is extended to $\wh E = E \times \R_+ \times (0,\infty)$ as in Section \ref{sec:finite}. The action space $A$ and the admissible state-action combinations $D$ remain unchanged, i.e.\
$\wh D = \{ (x,s,t,a) \in \wh E \times A: \ a \in D(x) \}$ and $\wh D(x,s,t) = D(x),\ (x,s,t) \in \wh E$. 
The transition function on the new state space is given by $\wh T: \wh D \times \Z \to \wh E$, 
\[\wh T(x,s,t,a,z) = \begin{pmatrix}
	T(x,a,z)\\
	s+t c(x,a,T(x,a,z))\\
	\beta t
\end{pmatrix}. \] 

Since the model with infinite planning horizon will be derived as a limit of the one with finite horizon, the consideration can be restricted to Markov policies $\pi=(d_1,d_2,\dots) \in \wh\Pi^M$ due to Theorem \ref{thm:finite}. For the relevant initial state $(x_0,0,1) \in \wh E$, a Markov policy $\pi \in \wh \Pi^M$ can be perceived as a history-dependent policy of the original decision model, cf.\ Section \ref{sec:finite}. When calculating limits, it is more convenient to index the value functions with the distance to the time horizon rather than the point in time. This is also referred to as \emph{forward form} of the value iteration and is only possible under Markov policies in a stationary model. There, the two ways of indexing are equivalent. The value of a policy $\pi=(d_0, d_1\dots ) \in \wh\Pi^M$ up to a planning horizon $N \in \N$ is
\begin{align*}
	J_{0\pi}(x,s,t) &= g(s) \notag \\
	J_{N\pi}(x,s,t) &= \E_{0x}\left[g\left(s + t \sum_{k=0}^{N-1} \beta^{k} c(X_k^\pi,d_k(X_k^\pi,\mathbf{s}_k^\pi,\mathbf{t}^\pi_k),X_{k+1}^\pi) \right)\right], 
\end{align*}
where $(X_n^\pi,\textbf{s}_n^\pi,\mathbf{t}^\pi_n)_{n \in \N}$ is the extended decision process under policy $\pi \in \wh\Pi^M$ with initial state $(x,s,t)\in \wh E$. The change of indexing makes it necessary to write the value iteration in terms of the \emph{shifted policy} $\vec \pi= (d_1,d_2, \dots)$ corresponding to $\pi=(d_0,d_1,\dots) \in \wh \Pi^M$:
\begin{align}\label{eq:infinite_value_iteration}
	J_{N\pi}(x,s,t) &= E\Big[J_{N-1\vec\pi}\Big(T(x,d_0(x,s,t),Z),\, s + t c(x,d_0(x,s,t),T(x,d_0(x,s,t),Z)),\, \beta t  \Big)\Big] \notag\\
	&= \T_{d_0} J_{N-1 \vec \pi}(x), \qquad (x,s,t) \in \wh E.
\end{align}
 The value function for finite planning horizon $N \in \N$ is given by
\begin{align*}
	J_N(x,s,t) = \inf_{\pi \in \wh \Pi^M} J_{N\pi}(x,s,t), \qquad (x,s,t) \in \wh E,
\end{align*}
and satisfies due to Theorem \ref{thm:finite} the Bellman equation
\begin{align*}
	J_N(x,s,t) =\T J_{N-1}(x,s,t) = \T^N g(x,s,t), \qquad (x,s,t) \in \wh E.
\end{align*}

The value of a policy $\pi \in \wh \Pi^M$ under an infinite planning horizon is defined as 
\begin{align*}
	J_{\infty \pi}(x,s,t) = \E_{0x}\left[g\left(s + t \sum_{k=0}^{\infty} \beta^{k} c(X_k^\pi,d_k(X_k^\pi,\mathbf{s}_k^\pi,\mathbf{t}^\pi_k),X_{k+1}^\pi) \right)\right], \qquad (x,s,t) \in \wh E.
\end{align*}
Note that $J_{\infty \pi}$ is well-defined since $c\ge 0$ and hence $J_{N\pi}$ is increasing. The infinite horizon value function is
\begin{align}\label{eq:opt_crit_inner_infinite_extended}
	J_{\infty}(x,s,t) = \inf_{\pi \in \Pi^M} J_{\infty \pi}(x,s,t), \qquad  (x,s,t) \in \wh E.
\end{align}

The  limit $J(x) = \lim_{N \to \infty} J_N(x), \ x \in E, $ which again exists since $J_N$ is increasing, is referred to as \emph{limit value function}. Note that $\MM$ is closed under pointwise convergence and hence $J\in \MM$.

\begin{theorem}\label{thm:infinite}
	Let Assumption 3.1 be satisfied. Then it holds:
	\begin{enumerate}
		\item The infinite horizon value function $J_\infty$ is the smallest fixed point of the Bellman operator $\T$ in $\MM$ and $J_\infty=J$. 
		\item There exists a Markov decision rule $d^*$ such that $\T_{d^*} J_\infty = \T J_\infty$ and each stationary policy $\pi^*=(d^*,d^*,\dots)$ induced by such a decision rule is optimal for optimization problem \eqref{eq:opt_crit_inner_infinite_extended}. 
	\end{enumerate}
\end{theorem}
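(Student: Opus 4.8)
The plan is to prove part (a) by showing that value iteration converges to a fixed point of $\T$, and then to obtain part (b) and the minimality of $J_\infty$ as byproducts. The backbone is a continuity-from-below property of the Bellman operator: if $v_N \uparrow v$ with $v_N \in \MM$, then $\T v_N \uparrow \T v$. One inequality, $\lim_N \T v_N \leq \T v$, is immediate from monotonicity of $L$ and of the infimum. For the reverse I would, for each $N$, select a minimizer $a_N \in D(x)$ of $L v_N(x,s,t,\cdot)$ (these exist by compactness of $D(x)$ and the lower semicontinuity of $L v_N$ in the action established in the proof of Theorem \ref{thm:finite}), extract via compactness a convergent subsequence $a_{N_k} \to a^*$, and note that for fixed $M$ one has $L v_M(x,s,t,a_{N_k}) \leq L v_{N_k}(x,s,t,a_{N_k}) = \T v_{N_k}(x,s,t)$ whenever $M \leq N_k$. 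Letting $k \to \infty$ and using lower semicontinuity of $L v_M$ gives $L v_M(x,s,t,a^*) \leq \lim_N \T v_N(x,s,t)$; then letting $M \to \infty$ and invoking monotone convergence yields $\T v(x,s,t) \leq L v(x,s,t,a^*) \leq \lim_N \T v_N(x,s,t)$.

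Applying this to $J_N = \T^N g$, so that $J_N = \T J_{N-1}$ and $J_N \uparrow J$, I pass to the limit to obtain $J = \T J$; hence the limit value function is a fixed point of $\T$ in $\MM$. Since $J \in \MM$, Proposition 2.4.3 of \citet{BaeuerleRieder2011}, exactly as in Theorem \ref{thm:finite}, furnishes a Markov decision rule $d^*$ with $\T_{d^*} J = \T J = J$.

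Next I would identify $J$ with $J_\infty$. Because $c \geq 0$ and $g$ is increasing, $J_{N\pi} \leq J_{\infty\pi}$ for every Markov policy $\pi$, so $J_N \leq J_\infty$ and therefore $J \leq J_\infty$. For the converse I use the stationary policy $\pi^* = (d^*, d^*, \dots)$: the forward value iteration gives $J_{N\pi^*} = \T_{d^*}^N g$, and since $g \leq J$ (as $J \in \MM$) and $\T_{d^*} J = J$, monotonicity of $\T_{d^*}$ yields $J_{N\pi^*} = \T_{d^*}^N g \leq \T_{d^*}^N J = J$. As $J_{N\pi^*} \uparrow J_{\infty\pi^*}$ by monotone convergence, this gives $J_{\infty\pi^*} \leq J$, whence $J_\infty \leq J_{\infty\pi^*} \leq J$. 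Combining the two directions, $J_\infty = J$, and the chain $J_{\infty\pi^*} \leq J = J_\infty \leq J_{\infty\pi^*}$ collapses to equality, so $\pi^*$ is optimal, which is part (b). For the minimality assertion in part (a), let $\tilde J \in \MM$ be any fixed point; since $\tilde J \geq g = J_0$ and $\T$ is monotone, induction gives $\tilde J = \T^N \tilde J \geq \T^N g = J_N$ for all $N$, hence $\tilde J \geq J = J_\infty$.

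The step I expect to be the main obstacle is the continuity-from-below of $\T$, specifically the simultaneous interchange of the limit with the infimum over actions and with the expectation. This is where compactness of $D(x)$, lower semicontinuity of $L v$ in the action, Fatou's lemma and monotone convergence must be combined carefully, and where the possibility of the value $+\infty$ must be kept in mind, which is harmless here only because all functions involved are non-negative.
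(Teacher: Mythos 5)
Your proof is correct, and it takes a genuinely different route from the paper's. The paper argues part (a) purely through policy-value comparisons: from $J_{N\pi}\ge J_N$ it concludes $J_\infty\ge J$, from $J_{N\pi}\le J_{\infty\pi}$ it concludes $J\le J_\infty$, and minimality follows by iterating $v=\T^n v\ge \T^n g$; the fixed-point identity $\T J_\infty=J_\infty$ is then used in part (b) (the first equality $J_\infty=\lim_N \T_{d^*}^N J_\infty$ is justified there simply ``by part a)''), but no continuity property of $\T$ is ever established. Your argument instead proves exactly that ingredient -- continuity from below of $\T$, via compactness of $D(x)$, lower semicontinuity of $Lv$ in the action, the diagonal bound $Lv_M(x,s,t,a_{N_k})\le \T v_{N_k}(x,s,t)$ for $M\le N_k$, and monotone convergence -- and deduces $J=\T J$ before $J_\infty$ enters the picture. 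This pays off twice. First, the two inequalities proved in the paper's part (a) are in fact the same easy direction ($J\le J_\infty$, stated twice); the genuinely hard direction $J_\infty\le J$ is delivered in your proof by the stationary policy, $J_\infty\le J_{\infty\pi^*}=\lim_N \T_{d^*}^N g\le J$, a step which requires knowing $\T_{d^*}J=J$, i.e.\ precisely the fixed-point property your lemma supplies. Second, your ordering of steps (fixed point, then minimizer, then optimality of $\pi^*$ and $J_\infty=J$ simultaneously) avoids invoking the fixed-point identity for $J_\infty$ before it has been proved. The cost of your route is one extra lemma; what it buys is a self-contained, non-circular derivation where the paper's written proof is elliptic. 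Your minimality argument coincides with the paper's.
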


\begin{proof}
	\begin{enumerate}
		\item First, we show that $J_\infty = J$. For all $N\in\N$ we have $J_{N\pi} \ge J_N$. Taking the limit $N\to\infty$ we obtain $J_{\infty \pi} \ge J$ for policies $\pi \in \wh \Pi^M$. Thus $J_\infty\ge J$.
		
		For the reverse inequality we start with $J_{N\pi} \le J_{\infty\pi }$ which is true  for all policies $\pi\in \wh \Pi^M$ due to the fact that $c\ge 0$. 		Taking the infimum over all policies yields $J_N\le J_\infty$ and taking the limit $N\to\infty$ we obtain $J\le J_{\infty}.$ It total, we have $J=J_\infty$.

		Let now $v\in \MM$ be another fixed point of $\T$, i.e. $v=\T v$. Iterating this equality yields $v=\T^n v$ for all $n\in\N$. Since $v\in \MM$ we have $v\ge g$ and because of the monotonicity of the Bellman operator we get $v=\T^nv\geq T^n g$. Letting $n\to\infty$ finally implies $v\geq J=J_\infty$, thus $J_\infty$ is the smallest fixed point of the Bellman operator. 
		\item Since $J_\infty \in  \MM$, the existence of a minimizing Markov decision rule follows as in the proof of  Theorem \ref{thm:finite}. Furthermore, it holds $J_\infty(x,s,t) \geq g(s), \ (x,s,t) \in \wh E$, since $J_\infty \in \MM$. Consequently, we have
		\[ J_\infty = \lim_{N \to \infty} \T_{d^*}^N J_\infty \geq \lim_{N \to \infty} \T_{d^*}^N g = \lim_{N \to \infty} J_{N\pi^*} = J_{\infty\pi^*} \geq J_\infty. \]
		i.e.\ $\pi^*$ is optimal. The first equality is by part a), the inequality thereafter by the monotonicity of the operator $\T_{d^*}$ and the second equality by the value iteration \eqref{eq:infinite_value_iteration}. \qedhere
	\end{enumerate}
\end{proof}

\section{Relaxed Assumptions for Monotone Models}\label{sec:monotone}

The  model has been introduced in Section \ref{sec:decision_model} with a general Borel space as state space. In order to solve the optimization problem in Sections \ref{sec:finite} and \ref{sec:infinite} we needed a continuous transition function despite having a semicontinuous model. This assumption on the transition function can be relaxed to semicontinuity if the state space is the real line and the transition and one-stage cost function have some form of monotonicity. For notational convenience, we consider the stationary model with no terminal cost under both finite and infinite horizon in this section. We replace Assumption \ref{ass:continuity_compactness} by 

\begin{assumption}\label{ass:monotone}
	\begin{enumerate}
		\item[(i)] The state space is the real line $E=\R$.
		\item[(ii)]  The sets $D(x)$ are compact and  $\R \ni x \mapsto D(x)$ is upper semicontinuous and decreasing, i.e.\ $D(x) \supseteq D(y)$ for $x \leq y$.
		\item[(iii)] The transition function $T$ is lower semicontinuous in $(x,a)$ and increasing in $x$.
		\item[(iv)] The one-stage cost $c(x,a,T(x,a,z))$ is lower semicontinuous in $(x,a)$ and increasing in $x$. 
	\end{enumerate}
\end{assumption}

Requiring that the one-stage cost function $c$ is lower semicontinuous in $(x,a,x')$ and increasing in $(x,x')$ is sufficient for Assumption \ref{ass:monotone} (iv) to hold due to part (iii) of the assumption.

How do the modified continuity assumptions affect the validity of the results in Sections \ref{sec:finite} and \ref{sec:infinite}?
The only two results that were proven using the continuity of the transition function $T$ in $(x,a)$ and not only its measurability are Theorems \ref{thm:finite} and \ref{thm:infinite}. All other statements are unaffected.

\begin{proposition}\label{thm:monotone}
	The assertions of Theorems \ref{thm:finite} and \ref{thm:infinite} hold under Assumption \ref{ass:monotone}, too. Moreover, the value functions $J_n$ and $J_\infty$ are increasing. The set of potential value functions can therefore be replaced by 
	\begin{align*}
		\MM = \big\{ v: \wh E \to \R\mid \ &  v \text{ is lower semicontinuous and increasing,}\\
		& v(x,s,t) \ge g(s) \text{ for } (x,s,t) \in \wh E \big\}.
	\end{align*}
\end{proposition}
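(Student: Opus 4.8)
The plan is to re-run the backward induction of the proof of Theorem \ref{thm:finite} (and then verbatim the proof of Theorem \ref{thm:infinite}), changing only the one step where continuity of $T$ was used. Recall that the sole use of continuity, as opposed to mere measurability, of the transition function was in establishing the lower semicontinuity of $(x,s,t,a)\mapsto L v(x,s,t,a)$, via lower semicontinuity of the integrand $v(\wh T(x,s,t,a,Z(\omega)))$ followed by Fatou's Lemma. Everything else in both theorems --- the Bellman equation, the existence of minimizers through Proposition 2.4.3 of \citet{BaeuerleRieder2011}, the fixed-point characterization and the stationary optimal policy --- is untouched. The entire effect of Assumption \ref{ass:monotone} will be channelled through the following elementary composition fact, which I would state and prove first: \emph{if $v:\R^m\to\R$ is increasing (in the componentwise order) and lower semicontinuous and $g_1,\dots,g_m$ are lower semicontinuous, then $v(g_1,\dots,g_m)$ is lower semicontinuous.} The quick proof is via superlevel sets: $\{v>\alpha\}$ is open and an upper set, hence contains, around each of its points $y$, an open box $\prod_i(y_i-\varepsilon,\infty)$; lower semicontinuity of the $g_i$ then makes $\{x: v(g(x))>\alpha\}$ open. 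Note this needs no continuity and no finiteness of the arguments.

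With this lemma in hand, fix $v$ in the \emph{new} set $\MM$ (increasing and lower semicontinuous, $v\ge g(s)$) and fix $\omega$. The three components of $\wh T(x,s,t,a,Z(\omega))$ are, as functions of $(x,s,t,a)$: the first $T(x,a,Z(\omega))$, lower semicontinuous by Assumption \ref{ass:monotone}(iii); the third $\beta t$, continuous; and the second $s+t\,c(x,a,T(x,a,Z(\omega)))$, which is lower semicontinuous since $c(x,a,T(x,a,Z(\omega)))\ge 0$ is lower semicontinuous in $(x,a)$ by Assumption \ref{ass:monotone}(iv), the map $t\mapsto t$ is continuous and strictly positive on $(0,\infty)$ (so the product with a nonnegative lower semicontinuous function is again lower semicontinuous), and $s$ enters continuously. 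The composition lemma then yields lower semicontinuity of $v(\wh T(\cdot,Z(\omega)))$ in $(x,s,t,a)$, and Fatou's Lemma (applicable because $v\ge g\ge 0$) upgrades this to lower semicontinuity of $Lv$. Proposition 2.4.3 of \citet{BaeuerleRieder2011} then delivers a minimizing decision rule and the lower semicontinuity of $\T v$, exactly as in Theorem \ref{thm:finite}.

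It remains to propagate monotonicity, which is where the order structure of Assumption \ref{ass:monotone} is needed. Monotonicity in $(s,t)$ follows as in Theorem 2.4.14 of \citet{BaeuerleRieder2011}, just as before. For monotonicity in $x$, take $x\le x'$ and any $a\in D(x')$; by Assumption \ref{ass:monotone}(ii) we have $D(x')\subseteq D(x)$, so $a$ is admissible in both states. Assumption \ref{ass:monotone}(iii) and (iv) give $T(x,a,Z)\le T(x',a,Z)$ and $c(x,a,T(x,a,Z))\le c(x',a,T(x',a,Z))$, whence (using $t>0$ and $v$ increasing) $v(\wh T(x,s,t,a,Z))\le v(\wh T(x',s,t,a,Z))$ pointwise, and taking expectations $Lv(x,s,t,a)\le Lv(x',s,t,a)$. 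Combining with the inclusion of feasible sets,
\[
\T v(x,s,t)=\inf_{a\in D(x)}Lv(x,s,t,a)\le \inf_{a\in D(x')}Lv(x,s,t,a)\le \inf_{a\in D(x')}Lv(x',s,t,a)=\T v(x',s,t).
\]
Hence $\T v$ is increasing in all variables and satisfies $\T v\ge g(s)$, so $\T v\in\MM$. Since $J_0(x,s,t)=g(s)$ lies in the new $\MM$ (increasing in $s$, constant in $x,t$, continuous, $\ge g(s)$), the backward induction of Theorem \ref{thm:finite} now runs unchanged and shows $V_n=J_n\in\MM$ with $J_n$ increasing. For the infinite horizon, the new $\MM$ is closed under increasing pointwise limits (a pointwise supremum of lower semicontinuous functions is lower semicontinuous, and an increasing limit of increasing functions is increasing), so $J=\lim_N J_N\in\MM$ is increasing and the proof of Theorem \ref{thm:infinite} applies verbatim.

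\textbf{Main obstacle.} The substance is entirely concentrated in the composition lemma and its applicability: one must verify that, with $T$ only lower semicontinuous, each coordinate of $\wh T$ remains lower semicontinuous (the nonnegative-times-positive product for the cost coordinate is the one point deserving care), and that the order is correctly preserved through the infimum over the \emph{shrinking} constraint sets $D(x)$ --- it is precisely the combination of $D$ decreasing with $T,c$ increasing in $x$ that makes both inequalities in the displayed chain point the same way. Once these structural checks are in place, no further analytic work is required beyond citing the existing proofs.
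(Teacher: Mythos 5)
Your proposal is correct and follows essentially the same route as the paper's proof: identify that continuity of $T$ was only used to establish lower semicontinuity of $(x,s,t,a)\mapsto Lv(x,s,t,a)$, replace that step by the fact that a composition of an increasing lower semicontinuous function with (componentwise) lower semicontinuous inner functions is lower semicontinuous, and let everything else in Theorems \ref{thm:finite} and \ref{thm:infinite} go through unchanged. You merely supply details the paper asserts or outsources --- the superlevel-set proof of the composition lemma, the care with the product $t\cdot c$, and the explicit chain of inequalities for monotonicity in $x$ (which the paper delegates to Theorem 2.4.14 of \citet{BaeuerleRieder2011}) --- all of which are accurate.
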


\begin{proof}
	In Theorem \ref{thm:finite}, the continuity of $T$ is used to show that $\wh D \ni (x,s,t,a) \mapsto Lv(x,s,t,a)$ is lower semicontinuous for every $v \in \MM$. Due to the monotonicity assumptions, the mapping 
	\[\wh D_n \ni (x,s,t,a) \mapsto v\Big(T(x,a,Z(\omega)),\, s+tc(x,a,T(x,a,Z(\omega))),\, \beta t\Big)   \]
	is lower semicontinuous for every $\omega \in \Omega$ as a composition of an increasing lower semicontinuous function with a lower semicontinuous one. Now, the lower semicontinuity of $\wh D \ni (x,s,t,a) \mapsto Lv(x,s,t,a)$ and the existence of a minimizing decision rule follow as in the proof of Theorem \ref{thm:finite}. The fact that $\T v$ is increasing for every $v \in \MM$ follows as in Theorem 2.4.14 in \citet{BaeuerleRieder2011}. In Theorem \ref{thm:infinite}, the continuity of $T$ is only used indirectly through Theorem \ref{thm:finite}. Note that $J_\infty \in \MM$ since the pointwise limit of increasing functions remains increasing. 
\end{proof}

The monotonicity properties of Assumption \ref{ass:monotone} can be used to construct a convex model.

\begin{lemma}\label{thm:abstract_total_convex}
	Let Assumption \ref{ass:monotone} be satisfied, $A$ be a subset of a real vector space, the admissible state-action-combinations $D$ be a convex set, the transition function $T$ be convex in $(x,a)$ and the one-stage cost $D \ni (x,a) \mapsto c(x,a,T(x,a,z))$ be a convex function for every $z \in \Z$. Then the value functions $J_n(\cdot,\cdot,t)$ and $J_\infty(\cdot,\cdot,t)$ are convex for every $t> 0$.   
\end{lemma}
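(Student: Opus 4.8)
The plan is to prove convexity by induction along the forward value iteration $J_N = \T J_{N-1}$ established in Proposition~\ref{thm:monotone}, keeping $t>0$ fixed and showing that $(x,s)\mapsto J_N(x,s,t)$ is convex for every $N$ and every $t$, and then to pass to the pointwise limit to obtain the claim for $J_\infty$. The base case is immediate: since the model has no terminal cost, $J_0(x,s,t)=g(s)$, which is convex in $(x,s)$ because $g\in G$ is convex and the expression does not depend on $x$. The inductive hypothesis is that for every $t'>0$ the map $(x,s)\mapsto J_{N-1}(x,s,t')$ is convex; I will use it at $t'=\beta t$.

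For the induction step, fix $t>0$ and set $v:=J_{N-1}$, which by Proposition~\ref{thm:monotone} lies in $\MM$ and is in particular nondecreasing in each of its coordinates. The first key step is to show that, for each fixed $\omega\in\Omega$, the integrand
\[
(x,s,a)\longmapsto v\Big(T(x,a,Z(\omega)),\ s+t\,c(x,a,T(x,a,Z(\omega))),\ \beta t\Big)
\]
is convex in $(x,s,a)$. Here the first inner coordinate $T(x,a,Z(\omega))$ is convex in $(x,a)$ by hypothesis, the second coordinate $s+t\,c(x,a,T(x,a,Z(\omega)))$ is convex in $(x,s,a)$ since $t>0$, the map $(x,a)\mapsto c(x,a,T(x,a,z))$ is convex, and $s$ enters affinely, and the third coordinate is the constant $\beta t$. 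Composing these convex inner maps with $v$, which is convex and nondecreasing in the two relevant arguments, yields a convex function by the classical fact that the composition of a convex function nondecreasing in each argument with convex functions is convex. Since convexity is preserved under taking expectations, $(x,s,a)\mapsto L J_{N-1}(x,s,t,a)$ is then convex in $(x,s,a)$.

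It remains to handle the minimization $J_N(x,s,t)=\inf_{a\in D(x)} L J_{N-1}(x,s,t,a)$. Writing $F(x,s,a):=L J_{N-1}(x,s,t,a)$, which is jointly convex by the previous step, this is a partial minimization of a jointly convex function over the section $D(x)$. The feasibility set $\{(x,s,a):(x,a)\in D,\ s\in\R_+\}$ is convex because $D$ is convex by assumption and $s$ ranges over the convex set $\R_+$ independently of the constraint on $(x,a)$; hence the standard result that partial minimization of a jointly convex function over a convex constraint set produces a convex function shows that $(x,s)\mapsto J_N(x,s,t)$ is convex, completing the induction. I expect this partial-minimization step, together with verifying that the constraint set remains jointly convex after the state-space extension to $\wh E$, to be the main point to get right, since minimization over $a$ can destroy convexity when the admissible set is not jointly convex in $(x,a)$; the convexity assumption on $D$ is exactly what rules this out.

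Finally, since $J_\infty=J=\lim_{N\to\infty}J_N$ pointwise by Theorem~\ref{thm:infinite}(a), whose validity under Assumption~\ref{ass:monotone} is guaranteed by Proposition~\ref{thm:monotone}, and pointwise limits of convex functions are convex, it follows that $(x,s)\mapsto J_\infty(x,s,t)$ is convex for every $t>0$, which establishes the lemma.
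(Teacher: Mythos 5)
Your proof is correct and follows essentially the same route as the paper's: induction along the value iteration, using that the integrand is a composition of the increasing convex function $J_{n}(\cdot,\cdot,\beta t)$ with convex inner maps, linearity of expectation, partial minimization of a jointly convex function over the convex set $D$ (the paper cites Proposition 2.4.18 of B\"auerle and Rieder for exactly this step), and finally pointwise convergence for $J_\infty$. Your explicit bookkeeping of the inductive hypothesis at $t'=\beta t$ and of the joint convexity of the constraint set is a slightly more careful writing of the same argument, not a different one.
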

\begin{proof}
	We prove by  induction that $J_n$ is convex in $(x,s)$ for $n\in \N_0$. Then $J_\infty$ is convex as a pointwise limit of convex functions. For $n=0$ we know that $J_0(x,s,t)= g(s)$ is convex in $(x,s)$. Now assume that $J_{n}$ is convex in $(x,s)$. Recall that $J_{n}$ increasing by Proposition \ref{thm:monotone}. Hence, for every $\omega \in \Omega$ and $t >0$ the function
	\[ (x,s,a) \mapsto J_{n}\Big(T(x,a,Z(\omega)),\, s+tc(x,a,T(x,a,Z(\omega))),\, \beta t\Big)   \]
	is convex as a composition of an increasing convex with a convex function. By the linearity of expectation 
	$(x,s,a) \mapsto  LJ_{n}(x,s,t,a)$ is convex, too, for every $t >0$. Now, the convexity of $J_n$ follows from Proposition 2.4.18 in \citet{BaeuerleRieder2011}.
\end{proof}

If $c$ is increasing in $x'$, it is sufficient to require that $c$ and $T$ are convex in $(x,a)$. The monotonicity requirements in Assumption \ref{ass:monotone} are only one option. The following alternative is relevant i.a.\ for the dynamic reinsurance model in Section \ref{sec:reinsurance}. For a proof see Section 6.1.3 in \citet{Glauner2020}.

\begin{corollary}\label{thm:decreasing}
	Change Assumption \ref{ass:monotone} (ii)-(iv) to 
	\begin{enumerate}
		\item[(ii')]  The sets $D(x)$ are compact and  $\R \ni x \mapsto D(x)$ is upper semicontinuous and increasing.
		\item[(iii')] $T$ is upper semicontinuous in $(x,a)$ and increasing in $x$.
		\item[(iv')]  $c(x,a,T(x,a,z))$ is lower semicontinuous in $(x,a)$ and decreasing in $x$. 
	\end{enumerate}
	Then, the assertions of Theorems \ref{thm:finite} and \ref{thm:infinite} still hold with the value functions $J_n$ and $J_\infty$ being decreasing in $x$ and increasing in $(s,t)$.\\
	If furthermore $A$ is a subset of a real vector space, $D$ a convex set, $T$ concave in $(x,a)$ and $D \ni (x,a) \mapsto c(x,a,T(x,a,z))$  convex for every $z \in \Z$. Then, the value functions $J_n(\cdot,\cdot,t)$ and $J_\infty(\cdot,\cdot,t)$ are convex for every $t >0$.
\end{corollary}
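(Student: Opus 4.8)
The plan is to mirror the proofs of Proposition \ref{thm:monotone} and Lemma \ref{thm:abstract_total_convex}, only changing the orientation of the monotonicity. Concretely, I would replace the set of potential value functions by
\[ \MM = \big\{ v:\wh E \to \R \mid v \text{ lsc, decreasing in } x, \text{ increasing in } (s,t),\ v(x,s,t)\ge g(s) \big\}. \]
As noted in Section \ref{sec:monotone}, Theorems \ref{thm:finite} and \ref{thm:infinite} are the only statements whose proofs used continuity of $T$ rather than mere measurability, so it suffices to re-verify two facts for every $v\in\MM$: that $\wh D\ni(x,s,t,a)\mapsto Lv(x,s,t,a)$ is lower semicontinuous, which yields a minimizing decision rule through Proposition 2.4.3 of \citet{BaeuerleRieder2011}, and that $\T$ maps $\MM$ into itself. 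The Bellman recursion, the reduction to Markov policies, and the infinite-horizon fixed-point characterisation then carry over as before.

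The main obstacle is the lower semicontinuity of $Lv$, since $T$ is now only upper semicontinuous; the point is that the monotonicity directions in (iii')--(iv') are chosen precisely to compensate. Fix $\omega\in\Omega$ and a convergent sequence $(x_k,s_k,t_k,a_k)\to(x,s,t,a)$ in $\wh D$, and write $w_k=v\big(T(x_k,a_k,Z(\omega)),\,s_k+t_kc(x_k,a_k,T(x_k,a_k,Z(\omega))),\,\beta t_k\big)$. Passing to a subsequence realising $\liminf_k w_k$ and refining further, I may assume $T(x_k,a_k,Z(\omega))\to y^*$ and $s_k+t_kc(x_k,a_k,T(x_k,a_k,Z(\omega)))\to s^*$. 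Upper semicontinuity of $T$ gives $y^*\le T(x,a,Z(\omega))$, lower semicontinuity of $(x,a)\mapsto c(x,a,T(x,a,Z(\omega)))$ together with $t>0$ gives $s^*\ge s+tc(x,a,T(x,a,Z(\omega)))$, and $\beta t_k\to\beta t$. Lower semicontinuity of $v$ yields $\liminf_k w_k\ge v(y^*,s^*,\beta t)$, and since $v$ is decreasing in its first and increasing in its second argument this is at least $v\big(T(x,a,Z(\omega)),\,s+tc(x,a,T(x,a,Z(\omega))),\,\beta t\big)$. Thus the integrand is lsc, and as $v\ge g\ge 0$ Fatou's lemma upgrades this to lower semicontinuity of $Lv$.

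To see $\T v\in\MM$, monotonicity in $(s,t)$ follows as in Theorem 2.4.14 of \citet{BaeuerleRieder2011} from $c\ge 0$ and $v$ increasing in $(s,t)$, while $\T v\ge g$ is immediate from $v\ge g$. For the decrease in $x$, take $x_1\le x_2$; by (ii') one has $D(x_1)\subseteq D(x_2)$, so for each fixed $a\in D(x_1)$ the bounds $T(x_1,a,Z)\le T(x_2,a,Z)$ (with $v$ decreasing in its first argument) and $c(x_1,a,T(x_1,a,Z))\ge c(x_2,a,T(x_2,a,Z))$ (with $t>0$ and $v$ increasing in its second argument) give $Lv(x_1,s,t,a)\ge Lv(x_2,s,t,a)\ge\T v(x_2,s,t)$; taking the infimum over $a\in D(x_1)$ yields $\T v(x_1,s,t)\ge\T v(x_2,s,t)$. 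This settles the finite-horizon assertions. For the infinite-horizon part I would use that $J=\lim_N J_N=\sup_N J_N$, because $c\ge 0$ makes $J_N$ increasing in $N$; a supremum of lsc functions is lsc and pointwise limits preserve monotonicity, so $J\in\MM$ and the proof of Theorem \ref{thm:infinite} runs unchanged.

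For the convexity addendum I would induct as in Lemma \ref{thm:abstract_total_convex}, now exploiting that $J_n(\cdot,\cdot,t)$ is decreasing in $x$ and increasing in $s$. With $J_0(x,s,t)=g(s)$ convex, assume $J_n(\cdot,\cdot,\beta t)$ is convex. For fixed $\omega$ and $t>0$ the map $(x,s,a)\mapsto J_n\big(T(x,a,Z(\omega)),\,s+tc(x,a,T(x,a,Z(\omega))),\,\beta t\big)$ is a composition of the convex function $J_n(\cdot,\cdot,\beta t)$, which is nonincreasing in its first and nondecreasing in its second argument, with the concave inner function $T$ and the convex inner function $s+tc(x,a,T(x,a,Z(\omega)))$; by the standard convexity-preserving composition rule it is convex. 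Linearity of expectation makes $(x,s,a)\mapsto LJ_n(x,s,t,a)$ convex, and since $D$ is a convex set, Proposition 2.4.18 of \citet{BaeuerleRieder2011} shows the partial minimisation $\T J_n(x,s,t)=\inf_{a\in D(x)}LJ_n(x,s,t,a)$ is convex in $(x,s)$; the pointwise limit then gives convexity of $J_\infty(\cdot,\cdot,t)$. I expect the genuinely delicate step to remain the lower-semicontinuity verification above: it is there that the reversed semicontinuity of $T$ must be balanced exactly against the monotonicity of $v$, and making the inequality chain point the right way is the crux of the argument.
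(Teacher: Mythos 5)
Your proof is correct and takes essentially the intended route: the paper gives no inline argument (it defers to Section 6.1.3 of \citet{Glauner2020}), and your reconstruction is exactly the mirrored adaptation of the paper's own proofs of Proposition \ref{thm:monotone} and Lemma \ref{thm:abstract_total_convex}, with the monotonicity directions of $v$ compensating the reversed semicontinuity of $T$ and with $D(x)$ increasing ensuring that the infimum over the smaller set $D(x_1)$ dominates. One minor polish to your lower-semicontinuity step: the subsequential limits $y^*$ and $s^*$ exist only in the extended reals and may equal $-\infty$ resp.\ $+\infty$, where $v(y^*,s^*,\beta t)$ is undefined; this is repaired either by applying your monotonicity inequalities \emph{before} invoking lower semicontinuity of $v$, e.g.\ bounding $w_k \geq v\big(\max\{T(x_k,a_k,Z(\omega)),\, T(x,a,Z(\omega))\},\, \min\{S_k,\, s+t\,c(x,a,T(x,a,Z(\omega)))\},\, \beta t_k\big)$ and letting $k\to\infty$ (the truncated arguments now converge to finite limits), or by arguing as in the paper's proof of Proposition \ref{thm:monotone}, viewing the integrand as a composition of the increasing lower semicontinuous function $(y,s,t)\mapsto v(-y,s,t)$ with componentwise lower semicontinuous maps.
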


\section{Outer Problem: Existence}\label{sec:outer_existence}

In this section, we study the existence of a solution to the outer optimization problem \eqref{eq:outer_problem} under both finite and infinite planning horizon. Given a solution of the respective inner problem for every $g \in G$, the two outer problems are essentially the same and therefore treated together. We have assumed in both cases that for all $x\in E$ there exists a policy $\pi$ such that $C^{\pi x}\in L^1$ and thus $\rho_\phi(C^{\pi x})=\bar{\rho}<\infty$. Hence in what follows we can restrict to policies $\pi$ such that $\rho_\phi(C^{\pi x})\le \bar{\rho}$. In this case, we can further restrict the set $G$ in the representation of Proposition \ref{thm:spectral_inf_rep}.

\begin{lemma}\label{thm:G}
It is sufficient to consider functions $g \in G$ in the representation of Proposition \ref{thm:spectral_inf_rep} which are $\phi(1)$-Lipschitz and satisfy $0 \leq g(x) \leq \bar g(x), \ x \in \R$, where
	\begin{align*}
		\bar g(x) = \phi(1) x^+ +\bar{\rho}.
	\end{align*}
	The space of such functions is denoted by $\G$.
\end{lemma}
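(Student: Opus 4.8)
The plan is to show that the outer objective
\[ \Psi(g) := \inf_{\pi \in \Pi} \E[g(C^{\pi x})] + \int_0^1 g^*(\phi(u))\,\dif u \]
attains the same infimum over the restricted class $\G$ as over $G$. Since $\G \subseteq G$, the inequality $\inf_{g \in \G}\Psi(g) \ge \inf_{g \in G}\Psi(g)$ is immediate, so only the reverse direction requires work. By the interchange of infima carried out in \eqref{eq:outer_problem} and the reduction to policies with $\rho_\phi(C^{\pi x}) \le \bar\rho$, it suffices to exhibit, for each such policy $\pi$, a function in $\G$ whose value of $\Psi$ does not exceed $\rho_\phi(C^{\pi x})$. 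The natural candidate is the inner minimizer $g_{\phi,X}$ from Remark \ref{rem:minimizer} with $X = C^{\pi x} \in L^1_{\geq 0}$, for which Proposition \ref{thm:spectral_inf_rep} and Remark \ref{rem:minimizer} give $\E[g_{\phi,X}(X)] + \int_0^1 g_{\phi,X}^*(\phi(u))\,\dif u = \rho_\phi(X)$, and hence $\Psi(g_{\phi,X}) \le \rho_\phi(X)$. Thus the whole lemma reduces to verifying that $g_{\phi,X} \in \G$ whenever $X \ge 0$ with $\rho_\phi(X) \le \bar\rho$.

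I would verify the three defining properties of $\G$ in turn. The Lipschitz bound is immediate from Remark \ref{rem:minimizer}: since $g_{\phi,X}'(x) = \phi(F_X(x))$ a.e., $\phi$ is increasing, and $F_X \le 1$, the slope never exceeds $\phi(1)$, so $g_{\phi,X}$ is $\phi(1)$-Lipschitz. Non-negativity follows because $X \ge 0$ forces $\q_X \ge 0$, so both summands in
\[ g_{\phi,X}(x) = \int_0^1 \q_X(\alpha)\,\mu(\dif\alpha) + \int_0^1 \frac{(x - \q_X(\alpha))^+}{1-\alpha}\,\mu(\dif\alpha) \]
are non-negative. For the upper bound $g_{\phi,X}(x) \le \bar g(x) = \phi(1)x^+ + \bar\rho$, I would estimate the two summands separately. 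For the first, monotonicity of $\q_X$ yields $\q_X(\alpha) \le \ES_\alpha(X)$, so by Proposition \ref{thm:ES_mix} it is at most $\int_0^1 \ES_\alpha(X)\,\mu(\dif\alpha) = \rho_\phi(X) \le \bar\rho$. For the second, $\q_X \ge 0$ gives $(x - \q_X(\alpha))^+ \le x^+$, and the defining relation $\dif\mu/\dif\nu = 1-\alpha$ collapses the integral to $x^+\int_0^1 \nu(\dif\alpha) = x^+\,\nu([0,1]) = \phi(1)\,x^+$.

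Adding the two estimates gives $g_{\phi,X} \le \bar g$, so $g_{\phi,X} \in \G$; taking the infimum of $\rho_\phi(C^{\pi x})$ over the admissible policies then yields $\inf_{g\in\G}\Psi(g) \le \inf_{g\in G}\Psi(g)$ and the claim follows. The main obstacle, and the only genuinely non-routine step, is the upper bound: one has to split $g_{\phi,X}$ into its level part $\int_0^1 \q_X\,\dif\mu$ and its excess part, and recognize that the measure identity $\dif\mu/\dif\nu = 1-\alpha$ is exactly what turns the excess integral into $\phi(1)x^+$, while the bound $\q_X(\alpha) \le \ES_\alpha(X)$ combined with Proposition \ref{thm:ES_mix} controls the level part by $\bar\rho$.
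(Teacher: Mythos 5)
Your proposal is correct and follows essentially the same route as the paper: both verify that the pointwise minimizer $g_{\phi,C}$ from Remark \ref{rem:minimizer} lies in $\G$, using the bound $\q_C(\alpha)\le \ES_\alpha(C)$ together with Proposition \ref{thm:ES_mix} for the level part, the identity $\int_0^1\frac{1}{1-\alpha}\,\mu(\dif\alpha)=\phi(1)$ for the excess part, and the a.e.\ derivative $\phi(F_C(\cdot))\le\phi(1)$ for the Lipschitz property. The only difference is that you spell out the logical reduction (why membership of the minimizer in $\G$ suffices), which the paper leaves implicit.
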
 

\begin{proof}
	Set $C=C^{\pi x}$ to simplify the notation and assume that $\rho_\phi(C)\le \bar{\rho}$. We know from Remark \ref{rem:minimizer} that the optimal $g \in G$ corresponding to $C$ is 
	\[ g_{\phi,C}(x) = \int_0^1 \q_C(\alpha) + \frac{1}{1-\alpha}\left( x- \q_C(\alpha) \right)_+ \mu(\dif \alpha), \qquad x \in \R, \]
	with $\mu$ from Proposition \ref{thm:ES_mix}.  Since $C \geq 0$ it follows
	\begin{align*}
		g_{\phi,C}(x)
		\geq \int_0^1 \q_C(\alpha) \mu(\dif \alpha)
		\geq 0. 
	\end{align*}
	Furthermore, we have
	\begin{align*}
		g_{\phi,C}(x) &= \int_0^1 \q_C(\alpha) \mu(\dif \alpha) + \int_0^1 \frac{1}{1-\alpha}\left( x- \q_C(\alpha) \right)^+ \mu(\dif \alpha)\\
		&\leq \int_0^1 \ES_\alpha(C) \mu(\dif \alpha) + x^+ \int_0^1 \frac{1}{1-\alpha} \mu(\dif \alpha)\\
		&= \rho_{\phi}(C) + \phi(1)x^+
		\leq \bar{\rho}  + \phi(1) x^+ 		= \bar g(x).
	\end{align*}
	The first inequality uses $\q_C(\alpha) = \VaR_{\alpha}(C) \leq \ES_{\alpha}(C)$ and $C \geq 0$. The identity
	\[  \int_0^1 \frac{1}{1-\alpha} \mu(\dif \alpha) = \phi(1)\]
	is by definition of $\mu$.  As a convex function, $g_{\phi,C}$ is almost everywhere differentiable with derivative $g_{\phi,C}'(x) = \phi(F_C(x)) \leq \phi(1)$, cf.\ Remark \ref{rem:minimizer}. This establishes the Lipschitz continuity with constant $L=\phi(1)$.
\end{proof}

 For a fixed policy $\pi \in \wh \Pi^M$, the optimal solution of the outer problem is already given by Remark \ref{rem:minimizer} as  
\[ g_{\phi,C_N^{\pi x}}(s) = \int_0^1 \q_{C_N^{\pi x}}(\alpha) + \frac{1}{1-\alpha}\left( s- \q_{C_N^{\pi x}}(\alpha) \right)_+ \mu(\dif \alpha), \qquad s \in \R,\ N \in \N\cup\{\infty\}. \]
However, we solved the inner problem for arbitrary but fixed $g \in \G$. Hence, the optimal policy depends on $g$ and Proposition \ref{thm:spectral_inf_rep} is not helpful.

As a first step in ensuring the existence of a solution of the outer problem, we study the dependence of the value functions of the inner problem on $g$. In order to do so, we need some structure on $\G$.

\begin{lemma}\label{thm:G_compact}
	$(\G,m)$ is a compact metric space, where
	\[ m(g_1,g_2) = \sum_{j=1}^{\infty} 2^{-j} \frac{\max_{|s| \leq j} |g_1(s)-g_2(s)|}{1+ \max_{|s| \leq j} |g_1(s)-g_2(s)| }\] 
	is the metric of compact convergence.
\end{lemma}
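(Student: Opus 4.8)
The plan is to show that $(\G, m)$ is a compact metric space by proving two things separately: that $m$ is a genuine metric, and that $\G$ is compact under $m$. Since the functions in $\G$ are uniformly Lipschitz and uniformly bounded on compacta, the natural route is through the Arzel\`a--Ascoli theorem applied on each compact interval $[-j,j]$, combined with a diagonal argument to handle all of $\R$.

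First I would verify that $m$ defines a metric. Symmetry and non-negativity are immediate from the construction. For the triangle inequality, the key observation is that $t \mapsto t/(1+t)$ is increasing and subadditive on $[0,\infty)$, so each summand satisfies the triangle inequality in the seminorm $\|g_1 - g_2\|_j := \max_{|s| \leq j} |g_1(s) - g_2(s)|$, and summing the weighted terms preserves it. For definiteness, $m(g_1,g_2) = 0$ forces $\|g_1-g_2\|_j = 0$ for every $j$, hence $g_1 = g_2$ pointwise on $\R$. I would also note that convergence in $m$ is equivalent to uniform convergence on every compact subset of $\R$, which is the content of calling it the metric of compact convergence; this equivalence follows because the $j$-th term controls the sup-norm on $[-j,j]$ up to the monotone bijection $t \mapsto t/(1+t)$.

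The main work is compactness, and here I would appeal to sequential compactness since $(\G,m)$ is metric. Take an arbitrary sequence $(g_k)_{k \in \N}$ in $\G$. On the compact interval $[-1,1]$ the functions are uniformly bounded (by $\bar g$, which is finite on $[-1,1]$) and equicontinuous (all are $\phi(1)$-Lipschitz), so by Arzel\`a--Ascoli a subsequence converges uniformly on $[-1,1]$. Passing successively to subsequences on $[-2,2]$, $[-3,3]$, and so on, and then taking the diagonal subsequence, I obtain a single subsequence $(g_{k_\ell})$ converging uniformly on every $[-j,j]$ to some limit function $g$. Uniform convergence on compacta is exactly convergence in $m$ by the previous paragraph.

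The final step is to check that the limit $g$ actually lies in $\G$, i.e.\ that $\G$ is closed. This is where I expect the only real subtlety, though it is mild: I must confirm that the defining properties of $\G$ are preserved under pointwise (indeed locally uniform) limits. The bound $0 \leq g(x) \leq \bar g(x)$ passes to the limit trivially since the inequalities hold for each $g_{k_\ell}$ at every point. The $\phi(1)$-Lipschitz property is preserved because $|g(x)-g(y)| = \lim_\ell |g_{k_\ell}(x) - g_{k_\ell}(y)| \leq \phi(1)|x-y|$. Convexity and monotonicity (the increasing property defining $G$) likewise survive pointwise limits, since $g(\lambda x + (1-\lambda)y) = \lim_\ell g_{k_\ell}(\lambda x + (1-\lambda)y) \leq \lim_\ell [\lambda g_{k_\ell}(x) + (1-\lambda) g_{k_\ell}(y)] = \lambda g(x) + (1-\lambda) g(y)$, and similarly for monotonicity. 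Hence $g \in \G$, so every sequence has a convergent subsequence with limit in $\G$, establishing compactness.
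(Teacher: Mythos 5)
Your proof is correct and follows essentially the same route as the paper: both establish compactness via the Arzel\`a--Ascoli theorem (equicontinuity from the common Lipschitz constant $\phi(1)$, pointwise boundedness from $0 \leq g \leq \bar g$) together with closedness of $\G$, which holds because convexity, monotonicity, the Lipschitz bound, non-negativity and the upper bound $\bar g$ all survive pointwise limits. The only difference is one of packaging — you verify that $m$ is a metric and unpack the diagonal-subsequence argument explicitly, whereas the paper invokes the Arzel\`a--Ascoli theorem for $C(\R,\R)$ with the metric of compact convergence directly.
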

\begin{proof}
	Since $\G \subseteq C(\R,\R)$, it suffices to show that $\G$ is closed w.r.t.\ $m$ and verify the assumptions of the Arzelà-Ascoli theorem. Note that convergence w.r.t.\ $m$ implies pointwise convergence. Convexity, monotonicity, the common Lipschitz constant $\phi(1)$, non-negativity and the pointwise upper bound $\bar g$ are all preserved even under pointwise convergence. Hence, $\G$ is closed w.r.t.\ $m$. Moreover, $\G$ is pointwise bounded and the common Lipschitz constant implies that it is uniformly equicontinuous.
\end{proof} 

For clarity we index the value functions with $g$. The value functions $J_0^g$ of the finite horizon inner problem and $J_\infty^g$ of the infinite horizon inner problem depend semicontinuously on $g$.

\begin{lemma}\label{thm:lsc_g_finite}
	Let Assumption \ref{ass:continuity_compactness} be satisfied. Then the functional $\G \times \wh E \ni (g,x,s,t) \mapsto J_n^g(x,s,t)$ is lower semicontinuous for all $n=0,\dots,N$.
\end{lemma}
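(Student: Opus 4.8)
The plan is to prove lower semicontinuity by a backward induction on $n$, showing that the joint lower semicontinuity in $(g,x,s,t)$ is preserved under the Bellman operator. The key structural fact I would exploit is that by Theorem \ref{thm:finite} we have $J_n^g = \T_n J_{n+1}^g$ with terminal condition $J_N^g(x,s,t) = g(s+tc_N(x))$, so the dependence on $g$ enters only through the terminal stage, and the induction must carry this dependence through the operators $\T_n$. Since convergence in the metric $m$ of Lemma \ref{thm:G_compact} implies pointwise (in fact locally uniform) convergence of the $g$'s, and since all $g \in \G$ share the common Lipschitz constant $\phi(1)$ and the uniform bound $\bar g$, the terminal functional $(g,x,s,t) \mapsto g(s+tc_N(x))$ should be lower semicontinuous: take sequences $g_k \to g$ in $(\G,m)$ and $(x_k,s_k,t_k)\to(x,s,t)$, write $g_k(s_k+t_kc_N(x_k)) \geq g(s_k+t_kc_N(x_k)) - |g_k - g|(\cdots)$, use local uniform convergence to kill the difference term and lower semicontinuity of $c_N$ together with continuity/monotonicity of the limit $g$ to handle the argument.

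The inductive step is where the real work lies. Assuming $(g,x,s,t)\mapsto J_{n+1}^g(x,s,t)$ is lower semicontinuous, I would first establish that the prelimit quantity $(g,x,s,t,a)\mapsto L_n J_{n+1}^g(x,s,t,a) = \E[J_{n+1}^g(\wh T_n(x,s,t,a,Z_{n+1}))]$ is jointly lower semicontinuous in all arguments including $g$. For a convergent sequence $(g_k,x_k,s_k,t_k,a_k)\to(g^*,x^*,s^*,t^*,a^*)$, the integrand
\[
J_{n+1}^{g_k}\Big(T_n(x_k,a_k,Z_{n+1}),\, s_k+t_kc_n(x_k,a_k,T_n(x_k,a_k,Z_{n+1})),\, \beta t_k\Big)
\]
is lower semicontinuous along the sequence by the induction hypothesis together with the continuity of $T_n$, lower semicontinuity of $c_n$, and continuity of the arithmetic operations, exactly as in the proof of Theorem \ref{thm:finite}. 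Since all $J_{n+1}^{g_k} \geq g_k(0) \geq 0$, Fatou's Lemma applies and yields $\liminf_k L_n J_{n+1}^{g_k}(x_k,s_k,t_k,a_k) \geq L_n J_{n+1}^{g^*}(x^*,s^*,t^*,a^*)$, giving the lower semicontinuity of $L_n$.

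Finally I would pass from $L_n$ to $\T_n J_{n+1}^g(x,s,t) = \inf_{a \in D_n(x)} L_n J_{n+1}^g(x,s,t,a)$. The minimization is over the compact, upper semicontinuous correspondence $x \mapsto D_n(x)$ from Assumption \ref{ass:continuity_compactness}(i), which does not depend on $g$ or $(s,t)$. I expect to invoke a standard measurable-selection / semicontinuity result for parametric minimization over an upper semicontinuous compact-valued correspondence (of the type in Proposition 2.4.3 of \citet{BaeuerleRieder2011}, now applied with the extra parameter $g$ ranging over the compact space $(\G,m)$): the lower semicontinuity of the objective $L_n$ together with compactness of the constraint sets guarantees that the value function of the inner minimization is lower semicontinuous jointly in the parameters $(g,x,s,t)$. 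This closes the induction and gives lower semicontinuity of $J_n^g$ for every $n=0,\dots,N$.

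\textbf{Main obstacle.} The delicate point is the last step: ensuring the infimum over $a$ preserves \emph{joint} lower semicontinuity in $(g,x,s,t)$ rather than only in $(x,s,t)$ for each fixed $g$. This requires treating $g$ as an additional (compact) parameter and verifying that the semicontinuity result one cites genuinely accommodates it; the fact that the constraint correspondence $D_n$ is independent of $g$ helps considerably, since it means only the objective carries the $g$-dependence, and that dependence was already shown to be lower semicontinuous at the $L_n$ stage.
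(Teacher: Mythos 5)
Your proposal is correct and follows essentially the same route as the paper: backward induction, with the terminal case handled via the common Lipschitz constant and monotonicity, and the inductive step via Fatou's lemma followed by Proposition 2.4.3 of \citet{BaeuerleRieder2011} applied with $g$ as an additional parameter ranging over the compact space $(\G,m)$, which is exactly how the paper resolves the obstacle you flag at the end. The one spot where your sketch is looser than the paper's argument is the base case when $c_N(x_k)\to\infty$: there the points $s_k+t_kc_N(x_k)$ leave every compact set, so locally uniform convergence cannot kill the difference term $|g_k-g|$ evaluated at those points, and one must instead use monotonicity of the functions $g_k$ themselves (not just of the limit $g$) to bound $g_k(s_k+t_kc_N(x_k))\geq g_k(M)$ for a fixed $M$ and invoke pointwise convergence --- precisely the case distinction the paper makes in its proof.
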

\begin{proof}
	The proof is by backward induction. At time $N$ we have to verify that $J_N^g(x,s,t)=g(s+tc_N(x))$ is lower semicontinuous in $(g,x,s,t)$. First, note that $\G \times \R_+ \ni (g,s) \mapsto g(s)$ is continuous since if $(g_k,s_k) \to (g,s)$, then $g$ converges especially pointwise and
	\begin{align*}
		\left| g_k(s_k)-g(s) \right| &= \left| g_k(s_k)-g_k(s)+g_k(s)-g(s) \right| \leq \left| g_k(s_k)-g_k(s)\right| + \left|g_k(s)-g(s) \right|\\
		&\leq \phi(1)\left| s_k-s\right| + \left|g_k(s)-g(s) \right|  \to 0 \quad \text{as } k \to \infty.
	\end{align*}
	Now let $(g_k,x_k,s_k,t_k) \to (g,x,s,t)$ and define the increasing sequence $\{c_k\}_{k \in \N}$ trough $c_k=\inf_{\ell\geq k} c_N(x_\ell)$.\\
	\emph{Case 1:} $\{c_k\}_{k \in \N}$ is bounded above and therefore convergent with limit $\hat c$. Then
	$$\hat c= \lim_{k \to \infty} c_k = \lim_{k \to \infty} \inf_{\ell\geq k} c_N(x_\ell) = \liminf_{k \to \infty} c_N(x_k) \geq c_N(x)$$ 
	since $c_N$ is lower semicontinuous. As the functions $\{g_k\}_{k \in \N}$ and $g$ are all increasing, we get
	\begin{align*}
		\liminf_{k \to \infty} g_k(s_k+t_kc_N(x_k)) &\geq \lim_{k \to \infty} g_k(s_k+t_kc_k) = g(s+t\hat c)\geq g(s+tc_N(x)).
	\end{align*}
	\emph{Case 2:} $\{c_k\}_{k \in \N}$ is unbounded above. Then there exists $K \in \N$ such that $c_k \geq c_N(x)$ for all $k \geq K$ and
	\begin{align*}
		\liminf_{k \to \infty} g_k(s_k+t_kc_N(x_k)) &\geq \liminf_{k \to \infty} g_k(s_k+t_kc_k)\geq \lim_{k \to \infty} g_k(s_k+t_kc_N(x))= g(s+tc_N(x)).
	\end{align*}
	
	Now assume the assertion holds for $n+1$. By Theorem \ref{thm:finite} we have at time $n$
	\[ J_n^g(x,s,t) = \inf_{a \in D(x)} \E\Big[ J_{n+1}^g\Big(T_n(x,a,Z_{n+1}),\, s+tc_n(x,a,T_n(x,a,Z_{n+1})),\, \beta t\Big) \Big]. \]
	The integrand $J_{n+1}^g\Big(T_n(x,a,Z_{n+1}(\omega)),\, s+tc_n(x,a,T_n(x,a,Z_{n+1}(\omega))),\, \beta t\Big)$  is lower semicontinuous in $(g,x,s,t,a)$ for every $\omega \in \Omega$ by the induction hypothesis. Hence, if $(g_k,x_k,s_k,t_k) \to (g,x,s,t)$, Fatou's lemma and the monotonicity of expectation yield
	\begin{align*}
		&\liminf_{k \to \infty} \E\Big[ J_{n+1}^{g_k}\Big(T_n(x_k,a_k,Z_{n+1}),\, s_k+t_kc_n(x_k,a_k,T_n(x_k,a_k,Z_{n+1})),\, \beta t_k\Big) \Big]\\
		&\geq  \E\Big[\liminf_{k \to \infty} J_{n+1}^{g_k}\Big(T_n(x,a,Z_{n+1}),\, s+tc_n(x,a,T_n(x,a,Z_{n+1})),\, \beta t\Big) \Big]\\
		&\geq \E\Big[ J_{n+1}^{g}\Big(T_n(x,a,Z_{n+1}),\, s+tc_n(x,a,T_n(x,a,Z_{n+1})),\, \beta t\Big) \Big]
	\end{align*}
	I.e.\ $(g,x,s,t) \mapsto L_nJ_{n+1}^g(x,s,t,a)$ is lower semicontinuous. As the set-valued mapping $E \ni x \mapsto D(x)$ is compact valued and upper semicontinuous, 
	\[ (g,x,s,t) \mapsto J_{n}^g(x,s,t,a)= \inf_{a \in D(x)} L_nJ_{n+1}^g(x,s,t,a) \]
	is lower semicontinuous by Proposition 2.4.3 in \citet{BaeuerleRieder2011}. 
\end{proof}

\begin{lemma}\label{thm:lsc_g_infinite}
	Let Assumption \ref{ass:continuity_compactness} be satisfied. Then the functional $\G \times \wh E \ni (g,x,s,t) \mapsto J_\infty^g(x,s,t)$ is lower semicontinuous for all $(x,s,t) \in \wh E$.
\end{lemma}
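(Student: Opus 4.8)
The plan is to realize the infinite-horizon value function as an increasing pointwise limit of the finite-horizon value functions and then invoke the standard fact that the supremum of a family of lower semicontinuous functions is again lower semicontinuous. This reduces the infinite-horizon statement entirely to the finite-horizon Lemma \ref{thm:lsc_g_finite}, without any new continuity analysis.

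First I would recall from Theorem \ref{thm:infinite} that $J_\infty^g = J^g = \lim_{N \to \infty} J_N^g$, where in the forward indexing of Section \ref{sec:infinite} the finite-horizon value function is $J_N^g = \T^N g$. Since the one-stage cost satisfies $c \geq 0$, the sequence $(J_N^g)_{N \in \N}$ is increasing in $N$, so that $J_\infty^g = \sup_{N \in \N} J_N^g$ pointwise on $\wh E$, and this representation holds for every fixed $g \in \G$. Next I would apply Lemma \ref{thm:lsc_g_finite}: for each fixed horizon $N$ the map $\G \times \wh E \ni (g,x,s,t) \mapsto J_N^g(x,s,t)$ is lower semicontinuous jointly in all of its arguments. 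Although Lemma \ref{thm:lsc_g_finite} is phrased for one fixed planning horizon, its backward induction in fact yields the joint lower semicontinuity of $\T^N g$ for every $N \in \N$, which is exactly what is needed. Consequently, $(g,x,s,t) \mapsto J_\infty^g(x,s,t)$ is the pointwise supremum over $N \in \N$ of jointly lower semicontinuous functions, and hence is itself lower semicontinuous, establishing the claim.

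I do not expect a genuine obstacle here, since the argument is just monotone convergence combined with stability of lower semicontinuity under suprema. The only point requiring care is the bookkeeping between the backward indexing of Section \ref{sec:finite} and the forward indexing of Section \ref{sec:infinite}: one must make sure that the object shown to be lower semicontinuous in Lemma \ref{thm:lsc_g_finite} is precisely $J_N^g = \T^N g$ for each $N$, so that the increasing limit defining $J_\infty^g$ genuinely coincides with the supremum of these jointly lower semicontinuous functions. Once this identification is in place, no further estimates are needed.
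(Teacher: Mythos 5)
Your proposal is correct and follows essentially the same route as the paper: both identify $J_\infty^g$ as the increasing pointwise limit of the finite-horizon value functions (which exists since $c \geq 0$), invoke Lemma \ref{thm:lsc_g_finite} for joint lower semicontinuity at each finite horizon (noting the induction basis with zero terminal cost), and conclude via stability of lower semicontinuity under increasing limits, which the paper cites as Lemma A.1.4 in \citet{BaeuerleRieder2011}.
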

\begin{proof}
	Under an infinite planning horizon, we consider a stationary model and use forward indexing for the value functions $J_N^g$. They are lower semicontinuous in $(g,x,s,t)$ by Lemma \ref{thm:lsc_g_finite}. Note that the induction basis holds especially for $c_N\equiv 0$. Since $J_N^g \uparrow J_\infty^g$ as $N \to \infty$, the assertion follows from Lemma A.1.4 in \citet{BaeuerleRieder2011}. 
\end{proof}

For initial state $x \in E$ and finite planning horizon $N \in \N$ the outer problem is given by
$\inf_{g \in \G} J_0^g(x,0,1) + \int_0^1 g^*(\phi(u)) \dif u$ and for infinite planning horizon  by  $\inf_{g \in \G} J_\infty^g(x,0,1) + \int_0^1 g^*(\phi(u)) \dif u$.
In the following, we will only use the semicontinuity of the value functions in $g$. Hence, we write 
\begin{align}\label{eq:generic_outer}
	\inf_{g \in \G} J(g) + \int_0^1 g^*(\phi(u)) \dif u
\end{align}
for a generic outer problem and suppress initial state and planning horizon.

\begin{theorem}\label{thm:outer_existence}
	Under Assumption \ref{ass:continuity_compactness} there exists a solution for the  outer optimization problem \eqref{eq:generic_outer}. 
\end{theorem}
\begin{proof}
	We want to apply  Weierstra{\ss}' extreme value theorem. In view of Lemmata \ref{thm:G_compact}, \ref{thm:lsc_g_finite} and \ref{thm:lsc_g_infinite} it suffices to show that the functional $\G \ni g \mapsto \int_0^1 g^*(\phi(u)) \dif u $
	is lower semicontinuous. Let $\{g_k\}_{k \in \N} \subseteq \G$ be a convergent sequence with limit $g \in \G$. It holds for all $u \in [0,1]$
	\begin{align}\label{eq:outer_existence_proof1}
		\liminf_{k \to \infty} g_k^*(\phi(u)) &= \lim_{k \to \infty} \inf_{\ell \geq k} g_\ell^*(\phi(u)) = \lim_{k \to \infty} \inf_{\ell \geq k} \sup_{s \in \R} \big\{ \phi(u)s -g_\ell(s) \big\}\notag\\
		&\geq \lim_{k \to \infty} \sup_{s \in \R} \inf_{\ell \geq k} \big\{ \phi(u)s -g_\ell(s) \big\} = \sup_{s \in \R} \lim_{k \to \infty} \inf_{\ell \geq k} \big\{ \phi(u)s -g_\ell(s) \big\}\notag\\
		&= \sup_{s \in \R} \big\{ \phi(u)s - \limsup_{k \to \infty} g_k(s) \big\}= \sup_{s \in \R} \big\{ \phi(u)s -  g(s) \big\} = g^*(\phi(u)).
	\end{align}
	The inequality holds generally for the interchange of infimum and supremum, the equality thereafter by Lemma A.1.6 in \citet{BaeuerleRieder2011} and the last but one equality since the sequence $\{g_k\}_{k \in \N}$ is especially pointwise convergent. Moreover note that for all $k \in \N$ and $u \in [0,1]$ it holds
	\[ g_k^*(\phi(u)) = \sup_{s \in \R} \big\{ \phi(u)s -g_k(s) \big\} \geq -g_k(0) \geq - \bar g(0) > -\infty. \]
	Now, Fatou's lemma and \eqref{eq:outer_existence_proof1} yield with
	\begin{align*}
		\liminf_{k \to \infty} \int g_k^*(\phi(u))\dif u \geq \int\liminf_{k \to \infty}  g_k^*(\phi(u))\dif u
		\geq \int g^*(\phi(u)) \dif u
	\end{align*}
	the assertion.
\end{proof}

\section{Outer Problem: Numerical Approximation}\label{sec:outer_numeric}

As we know now that a solution to the outer optimization problem \eqref{eq:generic_outer} exists, this section aims to determine the solution numerically. The idea is to approximate the functions $g \in \G$ by piecewise linear ones and thereby obtain a finite dimensional optimization problem which can be solved with classical methods of global optimization. We are going to show that the minimal values converge when the approximation is continuously refined and give an error bound. Regarding the second summand of the objective function \eqref{eq:generic_outer} our method coincides with the \emph{Fast Legendre-Fenchel Transform (FLT)} algorithm studied i.a.\ by \citet{Corrias1996}.

For unbounded cost $C_N^{\pi x}$ with $N \in \N \cup \{\infty\}, \ \pi \in \Pi, \ x \in E,$ the functions $g \in \G$ would have to be approximated on the whole non-negative real line. This is numerically not feasible. 

\begin{assumption}\label{ass:num}
	If $N \in \N$, we require additionally to Assumption \ref{ass:continuity_compactness} that $c$ is bounded from above by a constant $\bar c$. If $N =\infty$, we also assume that $\beta\in (0,1)$.
\end{assumption} 

Consequently, it holds $0 \leq C_N^{\pi x} \leq \hat c$ for all $N \in \N \cup \{\infty\},\ \pi \in \Pi$ and $x \in E$, where we define
\[ \hat c= \begin{cases}
	\sum_{k=0}^N \beta^k \bar c & \text{for finite planning horizon } N \in \N,\\
	\frac{\bar c}{1-\beta} & \text{for infinite planning horizon } N = \infty.
\end{cases}  \]
The bounded cost allows for a further reduction of the feasible set of the outer problem. On the reduced feasible set, the second summand of the objective function is guaranteed to be finite and easier to calculate. Recall that the convex conjugate of $g \in \G$ is an  $\bar \R$-valued function defined by
$ g^*(y) = \sup_{s \in \R} \{sy - g(s)\}, \ y \in \R. $

\begin{lemma}\phantomsection\label{thm:num_conjugate}
	\begin{enumerate}
		\item Under Assumption \ref{ass:num}, a minimizer of the outer optimization problem \eqref{eq:generic_outer} lies in 
		\[ \wh \G= \left\{  g \in \G: \ g(s)= g(0) \text{ for } s < 0 \text{ and } g(s)= g(\hat c) + \phi(1)(s-\hat c) \text{ for } s > \hat c \right\}. \]
		\item For $g \in \G$ and $y \in [0,\phi(1)]$ it holds
		$g^*(y) = \max_{s \in [0,\hat c]} \{sy-g(s)\} < \infty. $
	\end{enumerate}
\end{lemma}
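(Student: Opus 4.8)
The plan is to handle the two parts in turn, using throughout the uniform bound $0\le C_N^{\pi x}\le\hat c$ established above, which confines the total cost — and hence the argument at which any $g$ is evaluated in the inner value — to the compact interval $[0,\hat c]$.

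For part (a) I would show that an arbitrary competitor can be mapped into $\wh\G$ by a truncation that does not raise the objective. Given $g\in\G$, let $\tilde g$ agree with $g$ on $[0,\hat c]$, equal the constant $g(0)$ on $(-\infty,0)$, and equal $g(\hat c)+\phi(1)(s-\hat c)$ on $(\hat c,\infty)$. The first step is to verify $\tilde g\in\wh\G$, and in particular $\tilde g\in\G$. Convexity is the only nonroutine point: the three pieces match in value at $0$ and at $\hat c$, and their slopes are nondecreasing across the junctions, since the incoming slope $0$ at $s=0$ does not exceed $g'_+(0)\ge 0$ and the left derivative $g'_-(\hat c)\le\phi(1)$ at $s=\hat c$ does not exceed the outgoing slope $\phi(1)$, both bounds coming from $g$ being increasing and $\phi(1)$-Lipschitz. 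Monotonicity and the Lipschitz constant $\phi(1)$ are then immediate, and the sandwich $0\le\tilde g\le\bar g$ reduces to $0\le g$, the monotonicity of $g$ below $0$, and $g(\hat c)\le\bar g(\hat c)$ (so that the affine tail stays under $\bar g(s)=\phi(1)s+\bar{\rho}$).

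The second step of part (a) is the comparison of objective values. Since $\tilde g=g$ on $[0,\hat c]$ and the total cost lies in $[0,\hat c]$ for the initial state $(x,0,1)$, the inner value sees $g$ only on $[0,\hat c]$ and is therefore unchanged, $J(\tilde g)=J(g)$. For the conjugate term I would use that $\tilde g\ge g$ pointwise — on $(-\infty,0)$ because $g$ is increasing, and on $(\hat c,\infty)$ by the Lipschitz bound $g(s)\le g(\hat c)+\phi(1)(s-\hat c)$ — whence $\tilde g^*\le g^*$ and $\int_0^1\tilde g^*(\phi(u))\,\dif u\le\int_0^1 g^*(\phi(u))\,\dif u$. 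Thus truncation never increases the objective, and since Theorem \ref{thm:outer_existence} furnishes a minimizer, its truncation is a minimizer lying in $\wh\G$.

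For part (b) I would work on the reduced feasible set $\wh\G$ delivered by part (a) — the set on which, as announced before the lemma, the conjugate term is finite — and analyse the concave map $h(s)=sy-g(s)$ for $y\in[0,\phi(1)]$. The constant/affine continuation is exactly what confines its supremum to $[0,\hat c]$: on $(-\infty,0]$ one has $h(s)=sy-g(0)$, nondecreasing since $y\ge 0$, so $\sup_{s\le 0}h=h(0)$; on $[\hat c,\infty)$ one has $h(s)=h(\hat c)+(s-\hat c)(y-\phi(1))$, nonincreasing since $y\le\phi(1)$, so $\sup_{s\ge\hat c}h=h(\hat c)$. Hence the supremum over $\R$ is attained on the compact interval $[0,\hat c]$, where $h$ is continuous, giving $g^*(y)=\max_{s\in[0,\hat c]}\{sy-g(s)\}<\infty$. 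The main obstacle — and the reason the two parts are genuinely linked — is that this localisation and finiteness rely on the slope-$\phi(1)$ tail (without which $h$ could run off to $+\infty$ as $s\to\infty$) and on the flat part below $0$ (without which the maximiser could sit at some $s<0$); so the delicate points to secure are precisely the convexity of $\tilde g$ at the junctions $0$ and $\hat c$ and the pointwise domination $\tilde g\ge g$ used in part (a).
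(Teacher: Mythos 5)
Your proof is correct, and part (a) takes a genuinely different route from the paper. The paper argues via Remark \ref{rem:minimizer}: it suffices to consider functions that are optimal for at least one total cost $C=C_N^{\pi x}$, namely $g_{\phi,C}(s)=\int_0^1 \q_C(\alpha)+\frac{1}{1-\alpha}(s-\q_C(\alpha))_+\,\mu(\dif\alpha)$, and then reads off membership in $\wh\G$ directly from $0\le \q_C(\alpha)\le \hat c$ (so $g_{\phi,C}$ is constant on $s<0$) and from the a.e.\ derivative $g_{\phi,C}'(s)=\phi(F_C(s))=\phi(1)$ for $s>\hat c$. You instead build a truncation operator $g\mapsto\tilde g$ and verify $\tilde g\in\wh\G$ (the junction-slope checks at $0$ and $\hat c$ and the bound $\tilde g\le\bar g$ are exactly the right points to secure), that $J(\tilde g)=J(g)$ because the cost is confined to $[0,\hat c]$, and that $\tilde g\ge g$ forces $\tilde g^*\le g^*$; applying this to the minimizer from Theorem \ref{thm:outer_existence} yields a minimizer in $\wh\G$. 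Your argument buys some robustness: it does not invoke the explicit form of the pointwise-optimal $g_{\phi,C}$, and it sidesteps the paper's slightly loose step ``it is sufficient to consider functions optimal for at least one $C$'' (delicate because the optimal policy depends on $g$), producing a minimizer in $\wh\G$ by direct comparison instead. Part (b) of your proof is the same as the paper's — constant piece makes $s\mapsto sy-g(s)$ nondecreasing on $(-\infty,0]$, the slope-$\phi(1)$ tail makes it nonincreasing on $[\hat c,\infty)$ — and you correctly work on $\wh\G$ rather than $\G$; note the paper's own proof also begins ``Let $g\in\wh\G$,'' so the ``$g\in\G$'' in the statement is a typo that your restriction silently repairs (for general $g\in\G$ the supremum can be approached as $s\to-\infty$, e.g.\ when $y=0$ and $g$ decreases strictly below $g(0)$ to its infimum).
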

\begin{proof}
	\begin{enumerate}
		\item Fix $\pi \in \Pi, x \in E$ and set $C=C_N^{\pi x}$ to simplify the notation. We know from Remark \ref{rem:minimizer} that the optimal $g \in \G$ corresponding to $C$ is 
		\[ g_{\phi,C}(s) = \int_0^1 \q_C(\alpha) + \frac{1}{1-\alpha}\left( s- \q_C(\alpha) \right)_+ \mu(\dif \alpha), \qquad s \in \R, \]
		with $\mu$ from Proposition \ref{thm:ES_mix}. Clearly, it is sufficient to consider functions $g \in G$ which are optimal for at least one $C=C_N^{\pi x}$. 
		Since $0 \leq C \leq \hat c$ we have $0 \leq \q_C(\alpha) \leq \hat c$. Consequently, it holds for $s < 0$
		\[ g_{\phi,C}(s) = \int_0^1 \q_C(\alpha) \mu(\dif \alpha) = g(0). \]
		As a convex function, $g_{\phi,C}$ is almost everywhere differentiable with derivative $g_{\phi,C}'(s) = \phi(F_C(s))$, cf.\ Remark \ref{rem:minimizer}, and for $s > \hat c$ it holds $F_C(s)=1$.
		\item Let $g \in \wh\G$ and $y \in [0,\phi(1)]$. For $s \geq \hat c$ the function
		\[ s \mapsto sy-g(s)= (y-\phi(1)) s -g(\hat c) + \phi(1)\hat c \] 
		is decreasing and for $s \leq 0$ the function
		\[ s \mapsto sy-g(s)= sy -g(0)  \] 
		is increasing. Hence, it suffices to consider the supremum over $[0,\hat c]$.\qedhere 
	\end{enumerate}
\end{proof}

The fact that the supremum of the convex conjugate reduces to the maximum of a continuous function over a compact set, opens the door for a numerical approximation with the FLT algorithm. By definition of $\wh \G$, it is sufficient to approximate the functions $g \in \wh \G$ on the interval $[0,\hat c]$. For the value iteration in Lemma \ref{thm:value_iteration} and equation \eqref{eq:infinite_value_iteration} it may be necessary to evaluate $g$ in some $s > \hat c$, but here the function is determined as a linear continuation with slope $\phi(1)$. On the interval $I=[0,\hat c]$, the metric of compact convergence reduces to the supremum norm $\|\cdot\|_\infty$. For the piecewise linear approximation we consider equidistant partitions $0=s_1<s_2<\dots<s_m=\hat c$, i.e. $s_k=(k-1) \frac{\hat c}{m-1}, \ k=1,\dots,m, \ m \geq 2$. Let us define the mapping
\[ p_m(g)(s) = g(s_k) + \frac{g(s_{k+1})-g(s_k)}{s_{k+1}-s_k} (s-s_k), \qquad s \in [s^k, s^{k+1}], \ k=1,\dots,m-1, \]
which projects a function $g \in \wh\G$ to its piecewise linear approximation and its image $\wh\G_m=\{p_m(g): \ g \in \wh\G \}$. For considering the restriction of the outer optimization problem \eqref{eq:generic_outer} to $\wh\G_m$ it is convenient to define for $g \in \wh\G$
\begin{align*}
	K_m(g) = J(p_m(g)) + \int p_m(g)^*(\phi(u)) \dif u \qquad \text{and} \qquad 
	K(g) = J(g) + \int g^*(\phi(u)) \dif u.
\end{align*}

\begin{proposition}\label{thm:num_convergence}
	It holds $$\left|\inf_{g \in \wh\G}K_m(g) -\inf_{g \in \wh\G}K(g)  \right| \leq \sup_{g \in \wh\G} |K_m(g)-K(g)| \leq 2\phi(1) \frac{\hat c}{m-1}.$$
\end{proposition}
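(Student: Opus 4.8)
The plan is to prove the two inequalities separately. The left one is a soft general fact: for any two functions $f_1,f_2$ on a common domain, $|\inf f_1 - \inf f_2| \le \sup|f_1 - f_2|$. Applied here, for every $g \in \wh\G$ one has $K(g) \le K_m(g) + \sup_{g' \in \wh\G}|K_m(g') - K(g')|$; taking the infimum over $g$ gives $\inf_g K \le \inf_g K_m + \sup_{g}|K_m - K|$, and the symmetric estimate yields the reverse bound. This disposes of the left inequality without using any structure of $K$ and $K_m$.

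The substance is the right inequality, and the key preliminary step is a uniform bound on the interpolation error over $I = [0,\hat c]$. Fix $g \in \wh\G$ and $s \in [s_k, s_{k+1}]$. Since $g$ is increasing and $p_m(g)(s)$ is a convex combination of $g(s_k)$ and $g(s_{k+1})$, both values lie in the interval $[g(s_k), g(s_{k+1})]$, whence $|g(s) - p_m(g)(s)| \le g(s_{k+1}) - g(s_k) \le \phi(1)(s_{k+1}-s_k) = \phi(1)\tfrac{\hat c}{m-1}$ by the $\phi(1)$-Lipschitz property of $g \in \G$. This gives $\|g - p_m(g)\|_{\infty,I} \le \delta := \phi(1)\tfrac{\hat c}{m-1}$, uniformly in $g \in \wh\G$.

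With this at hand I would write $K_m(g) - K(g) = [J(p_m(g)) - J(g)] + \int_0^1 [p_m(g)^*(\phi(u)) - g^*(\phi(u))]\dif u$ and bound each term by $\delta$. For the inner-value term, Assumption \ref{ass:num} ensures $C^{\pi x} \in I$ for every policy, so $|\E[g(C^{\pi x})] - \E[p_m(g)(C^{\pi x})]| \le \delta$ for each $\pi$; since $J(g)$ and $J(p_m(g))$ are the corresponding infima over $\pi$, the same $|\inf - \inf| \le \sup$ argument gives $|J(p_m(g)) - J(g)| \le \delta$. For the conjugate term, Lemma \ref{thm:num_conjugate}(b) represents both $g^*(\phi(u))$ and $p_m(g)^*(\phi(u))$ as maxima of $s \mapsto s\phi(u) - (\cdot)(s)$ over the compact set $I$, valid because $p_m(g) \in \wh\G$ and $\phi(u) \in [0,\phi(1)]$; hence $|p_m(g)^*(\phi(u)) - g^*(\phi(u))| \le \max_{s \in I}|g(s) - p_m(g)(s)| \le \delta$ for every $u$, and integrating over $[0,1]$ preserves the bound. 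Adding the two contributions yields $\sup_{g \in \wh\G}|K_m(g) - K(g)| \le 2\delta = 2\phi(1)\tfrac{\hat c}{m-1}$.

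The points that need care, rather than a genuine obstacle, are twofold. First, one must check that $p_m(g)$ again belongs to $\wh\G$, which holds because piecewise linear interpolation preserves monotonicity, convexity and the Lipschitz constant, and the prescribed linear continuation of slope $\phi(1)$ beyond $\hat c$ (together with constancy below $0$) matches the definition of $\wh\G$; this is what lets Lemma \ref{thm:num_conjugate}(b) apply to $p_m(g)^*$ as well. Second, the cost must stay in $I$, which is exactly the boundedness furnished by Assumption \ref{ass:num} and is what converts the uniform estimate on $I$ into the bound on the inner-value term.
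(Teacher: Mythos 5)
Your proof is correct and takes essentially the same route as the paper's: the same decomposition of $K_m(g)-K(g)$ into the inner-value term and the conjugate term, each bounded by the uniform interpolation error $\sup_{s\in I}|p_m(g)(s)-g(s)|\leq \phi(1)\tfrac{\hat c}{m-1}$ via the elementary estimates $|\inf-\inf|\leq\sup|\cdot|$ and $|\sup-\sup|\leq\sup|\cdot|$ together with the compact-maximum representation of the conjugate from Lemma \ref{thm:num_conjugate}~b). Your explicit verification that $p_m(g)\in\wh\G$ (needed so that this representation applies to $p_m(g)^*$) is a point the paper leaves implicit, but otherwise the arguments coincide.
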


\begin{proof}
	The first inequality is obvious and it remains to prove the second. We have for $N \in \N \cup\{\infty\}$, $x \in E$ and $g \in \wh\G$
	\begin{align*}
		\left|J(p_m(g))-J(g)\right| &= \left| \inf_{\pi \in \Pi} \E[p_m(g)(C_N^{\pi x})] - \inf_{\pi \in \Pi} \E[g(C_N^{\pi x})] \right| \leq \sup_{\pi \in \Pi} \E\left|p_m(g)(C_N^{\pi x})- g(C_N^{\pi x}) \right|\\
		&\leq \sup_{s \in I}|p_m(g)(s)-g(s)|.
	\end{align*}
	Moreover, it holds for $y \in [0,\phi(1)]$
	\begin{align*}
		|p_m(g)^*(y)-g^*(y)| &= \left| \sup_{s \in I}\{sy-g(s)\} - \sup_{s \in I} \{sy-p_m(g)(s)\} \right| \leq \sup_{s \in I} |p_m(g)(s)-g(s)|.
	\end{align*}
	Finally, the assertion follows with
	\begin{align*}
		|K_m(g)-K(g)| &\leq \left|J(p_m(g))-J(g)\right| + \int \left|  p_m(g)^*(\phi(u)) - g^*(\phi(u)) \right|\dif u \leq 2 \sup_{s \in I}|p_m(g)(s)-g(s)|\\
		&= 2\max_{k=1,\dots,m-1} \max_{s \in [s_k,s_{k+1}]} \left| g(s) -g(s_k) - \frac{g(s_{k+1})-g(s_k)}{s_{k+1}-s_k} (s-s_k) \right|\\
		&\leq 2\max_{k=1,\dots,m-1} |g(s_{k+1}) -g(s_k)| \leq 2 \phi(1) \frac{\hat c }{m-1}. \qedhere
	\end{align*}
\end{proof}

The proposition shows that the infimum of $K_m$ converges to the one of $K$. The error of restricting the outer problem \eqref{eq:generic_outer} to $\wh\G_m$ is bounded by $2\phi(1)\frac{\hat c}{m-1}$. The piecewise linear functions $g \in \wh\G_m$ are uniquely determined by their values in the kinks $s_1,\dots,s_m$. Hence, we can identify $\wh\G_m$ with the compact set
\[ \Gamma_m = \left\{ (y_1,\dots, y_m) \in \R^m: \ y_1 \in I, \ 0 \leq \frac{y_2-y_1}{s_2-s_1} \leq  \dots\leq \frac{y_m-y_{m-1}}{s_m-s_{m-1}}\leq \phi(1) \right\}. \] 
Note that due to translation invariance of $\rho_\phi$ it holds under Assumption \ref{ass:num} for $g \in \wh\G$ that $g(0)\leq \bar g(0)=\rho(\bar C)= \rho(\hat c)=\hat c$. Thus, the outer problem \eqref{eq:generic_outer} restricted to $\wh\G_m$ becomes finite dimensional:
\begin{align}\label{eq:generic_outer_findim}
	\inf_{y \in \Gamma_m} J(g_y) + \int_0^1 g_y^*(\phi(u)),
\end{align}  
where $g_y \in \wh\G_m$ is the piecewise linear function induced by $y \in \Gamma_m$, i.e.\ $$g_y(s)= y_k + \frac{y_{k+1}-y_k}{s_{k+1}-s_k}(s-s_k), \qquad s \in [s_k,s_{k+1}],\ k=1,\dots,m-1.$$

How to evaluate $J(\cdot)$ in $g_y, \ y \in \Gamma_m,$ has been discussed in Sections \ref{sec:finite} and \ref{sec:infinite}. The next Lemma simplifies the evaluation of the second summand of the objective function \eqref{eq:generic_outer_findim} to calculating the integrals $\int_{u_k}^{u_{k+1}} \phi(u) \dif u$, where $u_0=0$, $u_k= \phi^{-1}\left(\frac{y_{k+1}-y_k}{s_{k+1}-s_k}  \right),\ k=1,\dots,m-1$ and $u_m=\phi(1)$.  

\begin{lemma}\label{thm:num_conjugate_approx}
	The convex conjugate of $g_y, \ y \in \Gamma_m,$ in $\xi \in [0,\phi(1)]$ is given by
	\[ g_y^*(\xi) = \begin{cases}
		- y_1,& 0\leq \xi < \frac{y_2-y_1}{s_2-s_1},\\
		s_{k+1} \xi - y_{k+1}, & \frac{y_{k+1}-y_k}{s_{k+1}-s_k} \leq \xi \leq \frac{y_{k+2}-y_{k+1}}{s_{k+2}-s_{k+1}}, \ k=1,\dots, m-2\\
		s_m \xi -y_m, & \frac{y_m-y_{m-1}}{s_{m}-s_{m-1}} < \xi \leq \phi(1). 
	\end{cases} \]
\end{lemma}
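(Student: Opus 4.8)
The plan is to compute the convex conjugate of the piecewise linear function $g_y$ directly from its definition $g_y^*(\xi) = \sup_{s \in \R}\{s\xi - g_y(s)\}$, exploiting that $g_y$ is convex and piecewise linear so that the supremum is attained at one of the kinks $s_1,\dots,s_m$. By Lemma \ref{thm:num_conjugate}(b), since $g_y \in \wh\G_m \subseteq \wh\G$ and $\xi \in [0,\phi(1)]$, it suffices to maximize $s \mapsto s\xi - g_y(s)$ over the compact interval $I=[0,\hat c]$. On this interval $g_y$ is convex and piecewise affine, so the objective $s \mapsto s\xi - g_y(s)$ is concave and piecewise affine; its maximum over $[0,\hat c]$ is therefore attained at a breakpoint $s_k$, and the optimal breakpoint is governed by comparing the slope $\xi$ to the slopes $\sigma_k := \frac{y_{k+1}-y_k}{s_{k+1}-s_k}$ of $g_y$ on the successive subintervals.

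First I would record that these slopes are increasing in $k$, which is exactly the defining constraint $0 \leq \sigma_1 \leq \dots \leq \sigma_{m-1} \leq \phi(1)$ of $\Gamma_m$; this monotonicity is what makes the maximizer's location a monotone function of $\xi$. The concave piecewise affine map $s\mapsto s\xi - g_y(s)$ has slope $\xi - \sigma_k$ on the $k$-th subinterval $[s_k,s_{k+1}]$. It is increasing as long as $\xi \geq \sigma_k$ and decreasing once $\xi \leq \sigma_k$, so the maximum is attained at the first kink $s_{k+1}$ for which the following slope $\sigma_{k+1}$ exceeds $\xi$ (equivalently, at the kink where the sign of $\xi - \sigma_k$ flips). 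I would then translate this geometric observation into the three cases of the claim: for $\xi < \sigma_1$ the objective is already decreasing from the start, so the maximizer is $s_1 = 0$, giving $g_y^*(\xi) = 0\cdot\xi - g_y(0) = -y_1$; for $\sigma_k \leq \xi \leq \sigma_{k+1}$ with $k \in \{1,\dots,m-2\}$ the objective increases up to $s_{k+1}$ and decreases afterwards, so the maximizer is $s_{k+1}$, giving $g_y^*(\xi) = s_{k+1}\xi - g_y(s_{k+1}) = s_{k+1}\xi - y_{k+1}$; and for $\xi > \sigma_{m-1}$ the objective is increasing throughout $[0,\hat c]$, so the maximizer is the right endpoint $s_m = \hat c$, giving $g_y^*(\xi) = s_m\xi - y_m$.

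The verification in each case is the elementary computation $g_y^*(\xi) = s_{k+1}\xi - g_y(s_{k+1})$ together with $g_y(s_j) = y_j$, which holds by the construction of $g_y$. The only subtlety I anticipate is the treatment of the overlapping boundary values $\xi = \sigma_k$, where two adjacent kinks both maximize the objective; here I would note that the two case formulas agree at such $\xi$ because the objective is then flat on the entire subinterval $[s_k,s_{k+1}]$, so the conjugate is well defined and continuous regardless of which endpoint is selected. This is exactly why the stated intervals may share endpoints without ambiguity.

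The main obstacle, such as it is, is not analytic but bookkeeping: making the correspondence between the index $k$ labelling slopes and the index labelling kinks precise, and confirming that the monotonicity of the slopes forces the maximizer to move monotonically to the right as $\xi$ increases, so that the cases partition $[0,\phi(1)]$ correctly and exhaustively. Everything else reduces to the standard fact that the Legendre--Fenchel conjugate of a convex piecewise linear function is again convex piecewise linear, with the conjugate's kinks at the primal slopes and the conjugate's slopes at the primal kinks; I would simply make this duality explicit for the finitely many pieces at hand.
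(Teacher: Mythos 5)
Your proposal is correct and follows essentially the same route as the paper: reduce via Lemma \ref{thm:num_conjugate} b) to maximizing $s \mapsto s\xi - g_y(s)$ over $[0,\hat c]$, use the monotonicity of the slopes $\frac{y_{k+1}-y_k}{s_{k+1}-s_k}$ (built into $\Gamma_m$) to locate the maximizing kink, and split into the same three cases. Your unimodality phrasing (the concave objective increases until the slope sign flips, then decreases) is a slightly cleaner packaging of the paper's explicit endpoint-by-endpoint comparison, and your remark on consistency at the overlapping boundary values $\xi = \frac{y_{k+1}-y_k}{s_{k+1}-s_k}$ matches the paper's equivalence argument.
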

\begin{proof}
	By Lemma \ref{thm:num_conjugate} b) we have $g_y^*(\xi) = \max_{s \in I} s\xi - g_y(s)$.  Note that the slopes $c_k= \frac{y_{k+1}-y_k}{s_{k+1}-s_k}$, $k=1,\dots, m-1$, are increasing. It follows
	\begin{align*}
		g_y^*(\xi) &= \sup_{s \in [0,\hat c]} s\xi - g_y(s)= \max_{k=1,\dots,m-1} \max_{s \in [s_k,s_{k+1}]} s (\xi-c_k) -y_k+c_ks_k.
	\end{align*}
	Let us distinguish three cases. Firstly, assume $\xi \in [c_\ell,c_{\ell+1}]$ for some $\ell \in \{1,\dots,m-2\}$. Then
	\begin{align*}
		g_y^*(\xi) &= \max \Big\{ \max_{k=1,\dots,\ell} s_{k+1}(\xi - c_k) -y_k + c_ks_k , \ \max_{k=\ell+1,\dots,m-1} s_{k}(\xi - c_k) -y_k + c_ks_k \Big\}\\
		&=	\max \Big\{ \max_{k=1,\dots,\ell} s_{k+1}\xi -y_{k+1} , \ \max_{k=\ell+1,\dots,m-1} s_{k}\xi -y_k \Big\}\\
		&= s_{\ell+1} \xi -y_{\ell+1}.
	\end{align*}
	The last equality holds, since $c_1\leq \dots \leq c_{m-1}$ and $c_{\ell} \leq \xi \leq c_{\ell+1}$ is equivalent to $\xi s_{\ell} -y_{\ell} \leq \xi s_{\ell+1} -y_{\ell+1} \geq \xi s_{\ell+2} -y_{\ell+2}$.
	Secondly, assume $\xi < c_1$. Then 
	\begin{align*}
		g_y^*(\xi) &= \max_{k=1,\dots,m-1} s_{k}(\xi - c_k) -y_k + c_ks_k
		=	\max_{k=1,\dots,m-1}  s_{k}\xi -y_k 
		= s_{1} \xi -y_{1} = -y_1.
	\end{align*}
	Again,  $\xi < c_{1}$ is equivalent to $\xi s_{2} -y_{2} < \xi s_{1} -y_{1}$.  Since $c_1\leq \dots \leq c_{m-1}$, this implies the last equality.
	The third case $c_{m-1} < \xi$ is analogous.
\end{proof}

The results of this section can be used to set up an algorithm for the problems in \eqref{eq:opt_crit_finite} and \eqref{eq:opt_crit_infinite}. First we have to set $m= \left\lceil \frac{2\phi(1)\hat c }{\epsilon}\right\rceil +1$ when we want to solve the problem with error estimate $\epsilon$. Then choose $y_0\in \Gamma_m$ and solve the inner problem with $g_{y_0}$. Use a global optimization procedure to determine the next $y_1$ (note that we do not have convexity of \eqref{eq:generic_outer_findim} in $y$) like e.g. simulated annealing to determine the optimal value of \eqref{eq:generic_outer_findim} . 


\section{Dynamic Optimal Reinsurance}\label{sec:reinsurance}

As an application, we present a dynamic extension in discrete time of the static optimal reinsurance problem
\begin{align}\label{eq:classical_reinsurance_problem}
	\min_{f \in \F} \quad r_{\text{CoC}} \cdot \rho \big(f(Y) + \pi_R(f)\big),
\end{align}
In this setting, an insurance company incurs an aggregate loss $Y \in L^1_{\geq 0}$  at the end of a fixed period due to insurance claims. In order to reduce its risk, the insurer may cede a portion of it to a reinsurance company and retain only $f(Y)$. Here, the reinsurance treaty $f$ determines the retained loss $f(Y(\omega))$ in each scenario $\omega \in \Omega$. For the risk transfer, the insurer has to compensate the reinsurer with a reinsurance premium $\pi_R(f)= \pi_R(Y-f(Y))$, where $\pi_R:L^1_{\geq 0} \to \bar \R$ is a premium principle with properties similar to a risk measure. Most widely used is the expected premium principle $\pi_R(X)=(1+\theta)\E[X]$ with safety loading $\theta >0$. In order to preclude moral hazard, it is standard in the actuarial literature to assume that both $f$ and the ceded loss function $\id_{\R_+} -f$ are increasing. Hence, the set of admissible retained loss functions is
\[ \F= \{f: \R_+ \to \R_+ \mid f(t) \leq t \ \forall t \in \R_+, \ f \text{ increasing}, \ \id_{\R_+}-f \text{ increasing} \}. \]
The insurer's target is to minimize its cost of solvency capital which is calculated as the cost of capital rate $r_{\text{CoC}} \in (0,1]$ times the solvency capital requirement determined by applying the risk measure $\rho$ to the insurer's effective risk after reinsurance.

First research on the optimal reinsurance problem \eqref{eq:classical_reinsurance_problem} dates back to the 1960s. \citet{Borch1960} proved that a stop-loss reinsurance treaty minimizes the variance of the retained loss of the insurer given the premium is calculated with the expected value principle. A similar result has been derived in \citet{Arrow1963} where the expected utility of terminal wealth of the insurer has been maximized. Since then a lot of generalizations of this problem have been considered. For a comprehensive literature overview, we refer to \citet{Albrecher2017}. Since the 2000s, Expected Shortfall has become of special interest. \citet{ChiTan2013} identified layer reinsurance contracts as optimal for Expected Shortfall under general premium principles. Their results were extended to general distortion risk measures by \citet{Cui2013}. Other generalizations concerned additional constraints, see e.g.\ \citet{Lo2017}, or multidimensional settings induced by a macroeconomic perspective, see \citet{BauerleGlauner2018}. We are not aware of any dynamic generalizations in the literature.

Reinsurance treaties are typically written for one year, cf.\ \citet{Albrecher2017}. Hence, it is appropriate to model such an extension in discrete time. The insurer's annual surplus has the dynamics
\[ X_0=x, \qquad X_{n+1}= X_n+Z_{n+1}-f_n(Y_{n+1})-\pi_R(f_n), \] 
where the bounded, non-negative random variable $Z_{n+1} \in L^\infty_{\geq 0}$ represents the insurer's premium income in the $n$-th period. The premium principle $\pi_R:L^p_{\geq 0} \to \bar \R$ of the reinsurer is assumed to be law-invariant, monotone, normalized and to have the Fatou property. Normalization means that $\pi_R(0)=0$ and the Fatou property is lower semicontinuity w.r.t.\ dominated convergence. 

The Markov Decision Model is given by the state space $E=\R$, the action space $A=\F$, either no constraint or a budget constraint $D(x) = \{f \in \F: \pi_R(f) \leq x^+ \}$, the independent disturbances $(Y_n,Z_n)_{n \in \N}$ with $Y_n \in L^1_{\geq 0}$ and $Z_n \in L^\infty_{\geq 0}$, the transition function $T(x,f,y,z) = x - f(y) - \pi_R(f) + z$ and the one-stage cost function $c(x,f,x')= x-x'$. The insurance companies target is to minimize its solvency cost of capital for the total discounted loss
\begin{align}\label{eq:cost_of_capital}
	\inf_{\pi \in \Pi} \quad r_{\text{CoC}} \cdot \rho_{\phi}\left( \sum_{k=0}^{N-1} \beta^k \Big(  d_k(H_k^\pi)(Y_{k+1}) + \pi_R(d_k(H_k^\pi)) - Z_{k+1} \Big) \right)
\end{align}
where $\rho_\phi$ is a spectral risk measure with bounded spectrum $\phi$, $\beta\in (0,1]$ and $N \in \N$.  As it is irrelevant for the minimization, we will in the sequel omit the cost of capital rate $r_{\text{CoC}}$ and instead minimize the capital requirement. For $\beta = 1$ we have
\[ \sum_{k=0}^{N-1}  d_k(H^\pi_k)(Y_{k+1}) + \pi_R(d_k(H^\pi_k)) - Z_{k+1} = \sum_{k=0}^{N-1} X_k^\pi - X_{k+1}^\pi =  x-X_N^\pi, \]
i.e.\ due to translation invariance of spectral risk measures the objective reduces to minimizing the capital requirement for the loss (negative surplus) at the planing horizon $-X_N^\pi$. This is reminiscent of the static reinsurance problem \eqref{eq:classical_reinsurance_problem}, however here the loss distribution at the planing horizon can be controlled by interim action. Throughout, we have required that the one-stage cost $c(x,f,T(x,f,Y,Z))= f(Y)+ \pi_R(f) -Z$ is non-negative. As $f(Y)$ and $ \pi_R(f)$ are non-negative for all $f \in \F$ and $c(x,\id_{\R_+},T(x,\id_{\R_+},Y,Z))=Y-Z$ due to normalization of $\pi_R$, the premium income $Z$ would have to be non-positive. This makes no sense from an actuarial point of view, but since $\rho_\phi$ is translation invariant and $Z \in L^{\infty}$ we can add $\sum_{k=0}^{N-1} \beta^k \esssup(Z)$ without influencing the minimization. This means that the one-stage cost function is changed to $\hat c(x,f,x')= x-x'+\esssup(Z)$. The economic interpretation is that the one-stage cost 
\[ \hat c(x,f,T(x,f,Y,Z)) = f(Y)+ \pi_R(f)+ \esssup(Z) -Z \]
now depends on the deviation from the maximal possible income instead of the actual income.  For brevity we write $\hat z= \esssup(Z)$.

As in \eqref{eq:outer_problem} we separate an inner and outer reinsurance problem. For a structural analysis we focus on the inner optimization problem
\begin{align}\label{eq:cost_of_capital_inner}
	\inf_{\pi \in \Pi} \E\left[g\left( \sum_{k=0}^{N-1} \beta^k \Big(  d_k(H_k^\pi)(Y_{k+1})
	+ \pi_R(d_k(H_k^\pi))+ \hat z -  Z_{k+1} \Big) \right)\right]
\end{align}
with arbitrary $g \in \G$, cf.\ Lemma \ref{thm:G}. Note that for $\pi=(f,f,\ldots)$ with $f=\id_{\R_+}$ we obtain $\rho_\phi(C_N^{\pi x})<\infty$.  On the extended state space $\wh E= \R \times \R_+\times (0,1]$, the value of a policy $\pi \in \wh \Pi$ is defined as
\begin{align*}
	V_{N\pi}(h_N) &= g(s_N),\\
	V_{n\pi}(h_n) &= \E_{nh_n}\left[g\left(s_n + t_n\sum_{k=n}^{N-1} \beta^{k-n} \Big(  d_k(H_k^\pi)(Y_{k+1})
	+ \pi_R(d_k(H_k^\pi))+ \hat z - Z_{k+1} \Big)\right)\right],
\end{align*}
for $n=0,\dots,N$ and $h_n \in \wh \H_n$. The corresponding value functions are
\begin{align*}
	V_{n}(h_n) = \inf_{\pi \in \wh\Pi} V_{n\pi}(h_n), \qquad h_n \in \wh\H_n.
\end{align*}
Due to the real state space we want to apply Corollary \ref{thm:decreasing} for solving the optimization problem. Note that  the one-stage cost $\hat c$ is non-negative and the spectrum $\phi$ bounded by assumption. 
The following lemma shows that also the monotonicity, continuity and compactness assumptions of Corollary \ref{thm:decreasing} are satisfied by the dynamic reinsurance model.

\begin{lemma}\phantomsection\label{thm:retained_loss_functions}
	\begin{enumerate}
		\item The retained loss functions $f \in \F$ are Lipschitz continuous with constant $L\leq1$. Moreover, $\F$ is a Borel space as a compact subset of the metric space $(C(\R_+),m)$ of continuous real-valued functions on $\R_+$ with the metric of compact convergence.
		\item The functional $\pi_R:\F \to \R_+, \ f \mapsto \pi_R(f)$ is lower semicontinuous.
		\item The transition function $T$ is upper semicontinuous and increasing in $x$.
		\item $D(x)$ is a compact subset of $\F$ for all $x \in \R$ and the set-valued mapping $\R \ni x \mapsto D(x)$ is upper semicontinuous and increasing.
		\item The one-stage cost $D \ni (x,a)\mapsto c(x,a,T(x,a,y,z))$ is lower semicontinuous and decreasing in $x$.
	\end{enumerate}
\end{lemma}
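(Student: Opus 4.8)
The plan is to verify the five properties in the order listed, since the later parts rest on the earlier ones, and the whole lemma is designed to discharge exactly the hypotheses (ii')--(iv') of Corollary~\ref{thm:decreasing}. Throughout I write $\pi_R(f)$ for $\pi_R(Y-f(Y))$ with $Y$ a representative of the disturbance distribution.

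For part (a) I would first observe that membership in $\F$ forces both $f$ and $\id_{\R_+}-f$ to be increasing, so for $s\le t$ one has $0\le f(t)-f(s)\le t-s$; this is exactly the $1$-Lipschitz bound. The compactness of $\F$ then follows from the Arzel\`a--Ascoli theorem in essentially the same way as in Lemma~\ref{thm:G_compact}: the common Lipschitz constant gives uniform equicontinuity, the sandwich $0\le f(t)\le t$ gives pointwise boundedness, and all defining properties (the bound $f(t)\le t$, monotonicity of $f$ and of $\id_{\R_+}-f$, non-negativity) survive pointwise limits, so $\F$ is closed in $(C(\R_+),m)$. A compact metric space is Borel.

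The crux is part (b). Here I would exploit that convergence in the metric of compact convergence implies pointwise convergence, so if $f_k\to f$ then $f_k(y)\to f(y)$ for every $y$, and hence the ceded losses satisfy $Y-f_k(Y)\to Y-f(Y)$ almost surely. Because $0\le f_k(y)\le y$, the whole sequence is dominated by $Y\in L^1_{\ge 0}$, so the dominated-convergence hypothesis of the Fatou property is met; lower semicontinuity w.r.t.\ dominated convergence then yields $\pi_R(f)\le\liminf_k\pi_R(f_k)$, which is the claim. Part (c) is then immediate: $T(x,f,y,z)=x-f(y)-\pi_R(f)+z$ is manifestly increasing in $x$, the map $(x,f)\mapsto x-f(y)+z$ is continuous (evaluation $f\mapsto f(y)$ is continuous for compact convergence), and $-\pi_R(f)$ is upper semicontinuous by part (b); a sum of a continuous and an upper semicontinuous function is upper semicontinuous.

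Parts (d) and (e) then follow by assembling (a)--(c). For (d), $D(x)=\F\cap\{f:\pi_R(f)\le x^+\}$ is the intersection of the compact set $\F$ with a sublevel set of the lower semicontinuous functional $\pi_R$, hence closed and therefore compact; it is nonempty since $\id_{\R_+}\in D(x)$ (its ceded loss is $0$ and $\pi_R(0)=0$ by normalization); and $x^+\le y^+$ for $x\le y$ gives $D(x)\subseteq D(y)$. Upper semicontinuity I would check directly: given $x_k\to x$ and $f_k\in D(x_k)$, compactness of $\F$ extracts $f_{k_j}\to f\in\F$, and the lower semicontinuity of $\pi_R$ together with continuity of $x\mapsto x^+$ gives $\pi_R(f)\le\liminf_j\pi_R(f_{k_j})\le\liminf_j x_{k_j}^+=x^+$, so $f\in D(x)$. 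For (e), a direct computation gives $c(x,f,T(x,f,y,z))=f(y)+\pi_R(f)-z$, which does not depend on $x$ (hence is trivially decreasing in $x$) and is lower semicontinuous in $(x,f)$ as the sum of the continuous map $f\mapsto f(y)$ and the lower semicontinuous map $\pi_R$. The one genuine obstacle is part (b): the work is in translating compact convergence of the treaties $f_k$ into the almost-sure, dominated convergence of the ceded-loss random variables that the Fatou property requires; everything else is bookkeeping around it.
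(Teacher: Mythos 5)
Your proof is correct and follows essentially the same route as the paper's: the Arzel\`a--Ascoli argument for (a), the pointwise-convergence-plus-domination appeal to the Fatou property for (b), evaluation continuity plus (b) for (c), sublevel-set compactness for (d), and the $x$-independence computation for (e). The only cosmetic differences are that in (c) you establish continuity of $f\mapsto f(y)$ for fixed $y$ (which indeed suffices, since semicontinuity is only needed in $(x,a)$ for each fixed disturbance) where the paper proves joint continuity in $(f,y)$, and in (d) you verify upper semicontinuity of $x\mapsto D(x)$ directly by a subsequence extraction instead of citing closedness of the graph of $D$.
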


\begin{proof}
	\begin{enumerate}
		\item Let $f \in \F$. Since $\id_{\R_+}-f$ is increasing, it holds for $0\leq x\leq y$ that $x-f(x) \leq y-f(y)$. Rearranging and using that $f$ is increasing, too, yields with $|f(x)-f(y)|=f(y)-f(x) \leq y-x = |x-y|$ the Lipschitz continuity with common constant $1$. Moreover, $\F$ is pointwise bounded by $\id_{\R_+}$ and closed under pointwise convergence. Hence, $(\F,m)$ is a compact metric space by the Arzelà-Ascoli theorem and as such also complete and separable, i.e. a Borel space.
		\item Let $\{f_k\}_{k \in \N}$ be a sequence in $\F$ such that $f_k \to f \in \F$. Especially, it holds $f_k(x) \to f(x)$ for all $x \in \R_+$ and $Y-f_k(Y) \to Y-f(Y)$ $\P$-a.s. Since $Y-f_k(Y) \leq Y \in L^1$ for all $k \in \N$, the Fatou property of $\pi_R$ implies
		\[ \liminf_{k \to \infty} \pi_R(f_k) = \liminf_{k \to \infty} \pi_R\big(Y-f_k(Y)\big) \geq \pi_R\big(Y-f(Y)\big)=\pi_R(f). \]
		\item We show that the mapping $\F \times \R_+ \ni (f,y) \mapsto f(y)$ is continuous. Then, the transition function $T$ is upper semicontinuous as a sum of upper semicontinuous functions due to part b). Let $\{(f_k,y_k)\}_{k \in \N}$ be a convergent sequence in $\F \times \R_+$ with limit $(f,y)$. Since convergence w.r.t.\ the metric $m$ implies pointwise convergence and all $f_k$ have the Lipschitz constant $L=1$, it follows
		\begin{align*}
			\left| f_k(y_k)-f(y) \right| \leq \left| f_k(y_k)-f_k(y)\right| + \left|f_k(y)-f(y) \right|\leq \left| y_k-y\right| + \left|f_k(y)-f(y) \right|  \to 0.
		\end{align*}
		The fact that $T$ is increasing in $x$ is obvious.
		\item Due to a), we only have to consider the budget-constrained case. Since $\F$ is compact it suffices to show that $D(x) = \{f \in \F: \pi_R(f) \leq x^+ \}$ is closed. This is the case since $D(x)$ is a sublevel set of the lower semicontinuous function $\pi_R:\F \to \R_+$, cf. Lemma A.1.3 in \citet{BaeuerleRieder2011}. Furthermore, we show that $D$ is closed to obtain the upper semicontinuity from Lemma A.2.2 in \citet{BaeuerleRieder2011}. From the lower semicontinuity of $\pi_R$ it follows that the epigraph 
		\[ \epi(\pi_R)= \{ (f,x) \in \F\times \R_+: \pi_R(f) \leq x \} \]
		is closed. Thus, $D=\{(x,f): (f,x) \in \epi(\pi_R)\} \cup ( \R_- \times D(0))$ is closed, too. That $x \mapsto D(x)$ is increasing is clear.
		\item The one-stage cost $c(x,f,T(x,a,y,z)) = x - T(x,f,y,z)=f(y)+\pi_R(f)-z$ is lower semicontinuous in $(x,f)$ as a sum of lower semicontinuous functions and decreasing in $x$ since it does not depend on $x$.  \qedhere
	\end{enumerate}
\end{proof}

Now, Corollary \ref{thm:decreasing} yields that it is sufficient to minimize over all Markov policies and the value functions satisfy the Bellman equation
\begin{align*}
	J_N(x,s,t)&= g(s),\\
	J_n(x,s,t)&= \inf_{f \in D(x)} \E\Big[ J_{n+1}\Big(x - f(Y) - \pi_R(f) + Z,\, s+t\big(f(Y)+\pi_R(f)+\hat z- Z\big),\, \beta t\Big) \Big]
\end{align*} 
for $(x,s,t) \in \wh E$ and $n=0,\dots,N-1$. Moreover, there exists a Markov Decision rule $d_n^*: \wh E \to \F$ minimizing $J_{n+1}$ and every sequence $\pi=(d_0^*,\dots,d_{N-1}^*) \in \wh\Pi^M$ of such minimizers is at solution to \eqref{eq:cost_of_capital_inner}.

All structural properties of the optimal policy which do not depend on $g$ are inherited by the optimal solution of the cost of capital minimization problem \eqref{eq:cost_of_capital}. The structural properties we will focus on in the rest of this section are induced by convexity. Therefore, we assume that the premium principle $\pi_R$ is convex and that there is no budget constraint. Note that $D(x)$ is non-convex even for convex $\pi_R$. Then, we have indeed a convex model: $D$ is trivially convex, the transition function   
$T(x,f,y,z) = x - f(y) - \pi_R(f) + z$ is concave in $(x,f)$ as a sum of concave functions and the one-stage cost $(x,f) \mapsto \hat c(x,f,T(x,f,y,z)) = f(y)+\pi_R(f)+\hat z- z$
is convex as a sum of convex functions. Now, Corollary \ref{thm:decreasing} yields that the value functions $J_n$ are convex. Under the widely-used expected premium principle, the optimization problem can be reduced to finite dimension.

\begin{example}
	Let $\pi_R(\cdot) = (1+\theta)\E[\cdot]$ be the expected premium principle with safety loading $\theta >0$ and assume there is no budget constraint. We will now show that the optimal reinsurance treaties (i.e.\ retained loss functions) can be chosen from the class of \emph{stop loss} treaties
	\[ f(x) = \min\{x,a\}, \qquad a \in [0,\infty]. \]
	Due to the convexity of $J_{n+1}$, we can infer from the Bellman equation that reinsurance treaty $f_1$ is better than $f_2$ if  
	\begin{align*}
		f_1(Y)+ \pi_R(f_1)\leq_{cx} f_2(Y)+ \pi_R(f_2),
	\end{align*}
	where $\leq_{cx}$ denotes the convex order. Since $Y_1 \leq_{cx} Y_2$ implies $\E[Y_1]=\E[Y_2]$, it suffices to find an $a_f \in [0,\infty]$ such that 
	\begin{align}\label{eq:cost_of_capital_cx_example}
		\min\{Y,a_f\} \leq_{cx} f(Y).
	\end{align}
	The mapping $[0,\infty] \to \R_+, \ a \mapsto \min\{Y(\omega),a\}$ is continuous for all $\omega \in \Omega$ and $0 \leq  \min\{Y,a\} \leq Y \in L^1$. Thus, it follows from dominated convergence that $[0,\infty] \to \R_+, \ a \mapsto \E[\min\{Y,a\}]$ is continuous. Furthermore,
	\[ \E[\min\{Y,0\}] \leq \E[f(Y)] \leq \E[\min\{Y,\esssup(Y)\}]. \]
	Hence, by the intermediate value theorem there is an $a_f \in [0,\infty]$ such that $\E[f(Y)] = \E[\min\{Y,a_f\}]$. Let us compare the survival functions:
	\begin{align*}
		S_{\min\{Y,a_f\}}(y)&=\P(\min\{Y,a_f\}>y)=\P(Y>y)\1\{a_f>y\},\\
		S_{f(Y)}(y)&= \P(f(Y)>y)\leq \P(Y>y).
	\end{align*}
	The inequality holds since $f \leq \id_{\R_+}$. Hence, we have $S_{\min\{Y,a_f\}}(y) \geq S_{f(Y)}(y)$ for $y<a_f$ and $S_{\min\{Y,a_f\}}(y) \leq S_{f(Y)}(y)$ for $y\geq a_f$. The cut criterion 1.5.17 in \citet{MuellerStoyan2002} implies $\min\{Y,a_f\} \leq_{icx} f(Y)$ and due to the equality in expectation follows \eqref{eq:cost_of_capital_cx_example}, cf.\ Theorem 1.5.3 in \citet{MuellerStoyan2002}. So the inner optimization problem \eqref{eq:cost_of_capital_inner} is reduced to finding an optimal nonnegative parameter of a stop loss treaty at every stage.
\end{example}

\bibliographystyle{apalike}
\renewcommand{\bibfont}{\small}
\bibliography{Literature_MDP_spectral_risk_measure}

\end{document}